\documentclass[reqno]{amsart}

\usepackage{amsmath}
\usepackage{amsthm}
\usepackage{amssymb}
\usepackage{amsbsy}
\usepackage{amsfonts}
\usepackage{amstext}
\usepackage{amscd}
\usepackage{tikz}
\usepackage{graphicx}

\usepackage[colorlinks=true,linkcolor=blue,citecolor=blue]{hyperref}

\usepackage{color}

\usepackage[T1]{fontenc}
\usepackage{graphicx}
\usepackage{blindtext} 
\usepackage{geometry}
\numberwithin{equation}{section}

\newtheorem{theorem}{Theorem}[section]
\newtheorem{lemma}[theorem]{Lemma}

\newtheorem{corollary}[theorem]{Corollary}

\newtheorem{notation}[theorem]{Notation}
\newtheorem{remark}[theorem]{Remark}

\newcommand{\quant}{\mathcal{Q}}
\newcommand{\pp}{p}
\newcommand{\qq}{q}
\newcommand{\rr}{q}

\newcommand{\PP}{P}
\newcommand{\QQ}{Q}
\newcommand{\RR}{Q}

\newcommand{\ppp}{p}
\newcommand{\qqq}{q}
\newcommand{\rrr}{q}

\newcommand{\tpol}{T_{n,l}^{(a,b)}}

\newcommand{\falling}[2]{\left(#1\right)_{#2}}

\newcommand{\coef}[2]{\mathsf{e}_{#1}(#2)}

\usepackage{stmaryrd} 
\newcommand{\meas}[1]{\left\llbracket #1 \right\rrbracket}

\begin{document}

\title[Repeated differentiation of deterministic polynomials]{Repeated differentiation of deterministic polynomials with asymptotically radial root distributions}

\author{Brian C. Hall}
\address{University of Notre Dame, Notre Dame, IN 46556, USA }
\email{bhall@nd.edu}
\thanks{Hall's research was supported in part by a grant from the Simons Foundation.}

\author{Daniel Perales}
\address{University of Notre Dame, Notre Dame, IN 46556, USA }
\email{dperale2@nd.edu}
\thanks{Perales's research was supported in part by a grant from the American Mathematical Society.}

\begin{abstract}
    Recent works of Galligo, Najnudel, and Vu (2025) and Najnudel and Vu (2026) study repeated differentiation for polynomials of the form $P(z)=p(z^m)$, where $p$ is a deterministic polynomial of degree $n$ with real, non-negative roots, in the regime where $m$ and $n$ are large. If $m\gg \log(n)$ and the root distribution of $P$ converges to a compactly supported, radial probability measure $\mu_0$, these works show that for $0\le t<1$, the root distribution of the $\lfloor nmt\rfloor$-th derivative of $P$ converges to a compactly supported probability measure $\mu_t$ given by an explicit formula for its radial quantile function.

    We give a substantially simplified proof of this result and also extend the result from repeated differentiation to repeated applications of the differential operator $z^a(d/dz)^b$. We also compute the limiting root distribution in the case when $m$ is fixed and $n$ tends to infinity.
\end{abstract}

\maketitle
\tableofcontents


\section{Introduction}

\subsection{Previous work}

Numerous papers have studied the evolution of roots of high-degree polynomials under differentiation. (We do not attempt to cite every such paper.) In the case of a single derivative, the limiting root distribution typically does not change. See, for example, Theorem 1.1 in the paper of Totik \cite{totik2019} when the support of the limiting root distribution has connected complement or the results of Kabluchko \cite{kabluchko2015}, Byun, Lee, and Reddy \cite{byun2022}, or Michelen and Vu \cite{michelen2024} for random polynomials with i.i.d.\ roots. In the case of random polynomials with independent roots, one can also understand how the individual roots move under a single derivative, as in the \cite[Theorem 2.8]{orourke2020}.

It is then natural to consider \textit{repeated} differentiation of high-degree polynomials, where the number of derivatives is proportional to the degree of the polynomial. For polynomials with real roots, the problem is well understood and can be expressed in terms of the concept of fractional free convolution. See, for example, works of Steinerberger \cite{steinerberger2019} and Hoskins and Kabluchko \cite{hoskins2023}.

For polynomials with complex roots, the focus of research up to this point has been on the case of polynomials with an asymptotically radial distribution of roots. We therefore consider a sequence of polynomials $P^N$ whose limiting root distribution is a rotationally invariant, compactly supported probability measure $\mu_0$. For $0\le t<1$, we define
\[
P^N_t=\left(\frac{d}{dz}\right)^{\lfloor Nt\rfloor}P^N_0.
\]
The goal is to understand the limiting root distribution $\mu_t$ of $P^N_t$ in terms of the limiting root distribution of $P^N_0$. Note that $\deg(P^N_t)=N-\lfloor Nt\rfloor$. The simplest way to state the results is in terms of the polynomial of degree $N$ given by
\begin{equation}
Q^N_t(z)=z^{\lfloor Nt\rfloor}P^N_t(z).
\end{equation}
That is to say, after we have applied $\lfloor Nt\rfloor$ derivatives to $P_0$---and therefore killed $\lfloor Nt\rfloor$ of the roots of $P_0$---we put $\lfloor Nt\rfloor$ roots back in at the origin. Thus, the limiting root measure $\sigma_t$ will have mass (at least) $t$ at the origin. Thus, to recover $\mu_t$ from $\sigma_t$, we remove mass of $t$ at the origin and then rescale the result to be a probability measure:
\[
\mu_t=\frac{1}{1-t}(\sigma_t-t\delta_0).
\]

It is then convenient to express the measure $\sigma_t$ in terms of its radial quantile function. For a compactly supported, radial probability measure $\mu$, the radial quantile function $\quant_\mu$ of $\mu$ is defined as
\[
\quant_\mu(\alpha)=\inf\{r\ge0\vert \mu(D_r)\ge \alpha\},
\]
where $D_r$ is the closed disk of radius $r$ centered at the origin. Note that $\quant_\mu(\alpha)$ is the radius $r$ for which $\mu(D_r)=\alpha$, whenever a unique such $r$ exists. In general, $q$ is left-continuous and weakly increasing on $[0,1]$ with $\quant_\mu(0)=0.$ Furthermore, the measure $\mu$ is uniquely determined by $\quant_\mu$. 

Heuristic arguments, starting from the work of O'Rourke and Steinerberger \cite{orourke2021}, suggest that the following universal behavior should hold. \\

\textbf{Universality conjecture.} \emph{ Assume that $\mu_0$ is a rotationally invariant, compactly supported probability measure satisfying some mild assumptions and $P^N_0$ is any sequence of polynomials $P^N_0$ with empirical root measure converging to $\mu_0$. Then the limiting root distribution $\sigma_t$ of $Q^N_t$ has quantile function given by
\begin{equation}\label{quantile_qt}
\quant_{\sigma_t}(\alpha)=\left\{\begin{matrix}0 & 0\le\alpha\le t\\ \quant_{\mu_0}(\alpha)\left(1-\frac{t}{\alpha}\right) & t<\alpha\le 1\end{matrix}\right.
\end{equation}
}\\

See Appendix A of \cite{HHJKdiff} for a detailed heuristic account of the conjecture. 

An example of a reasonable assumption for the initial measure $\mu_0$ is that it is absolutely continuous with respect to the two-dimensional Lebesgue measure. Note that assumption on $\mu_0$ must be imposed. Consider, for example, the case where $\mu_0$ is the uniform measure on the unit circle. Then $P^N_0=z^N-1$ and the so-called Kac polynomials (random polynomials with i.i.d. coefficients) both have limiting measure $\mu_0$. But under differentiation, the roots of $z^N-1$ immediately collapse to the origin, while the roots of the differentiated Kac polynomials converge to an absolutely continuous measure $\mu_t$. (See \cite[Theorem 3]{feng2019}.)

The form of the quantile function \eqref{quantile_qt} in the universality conjecture suggests that, when taking $\lfloor Nt \rfloor$ derivatives, the individual roots of $P^N_0$ move in the following way as a function of $t$. If $j\approx\alpha N$, then the $j$-th smallest root $z_j$ of $P^N_0$ evolves radially inward as
\[
z_j(t)\approx z_j\left(1-\frac{t}{\alpha}\right).
\]
for $0\le t\le\alpha$, and then the root dies at time $t=\alpha$. See Figure \ref{bigandsmall.fig}.

\begin{figure}
\centering
\includegraphics[scale=0.5]{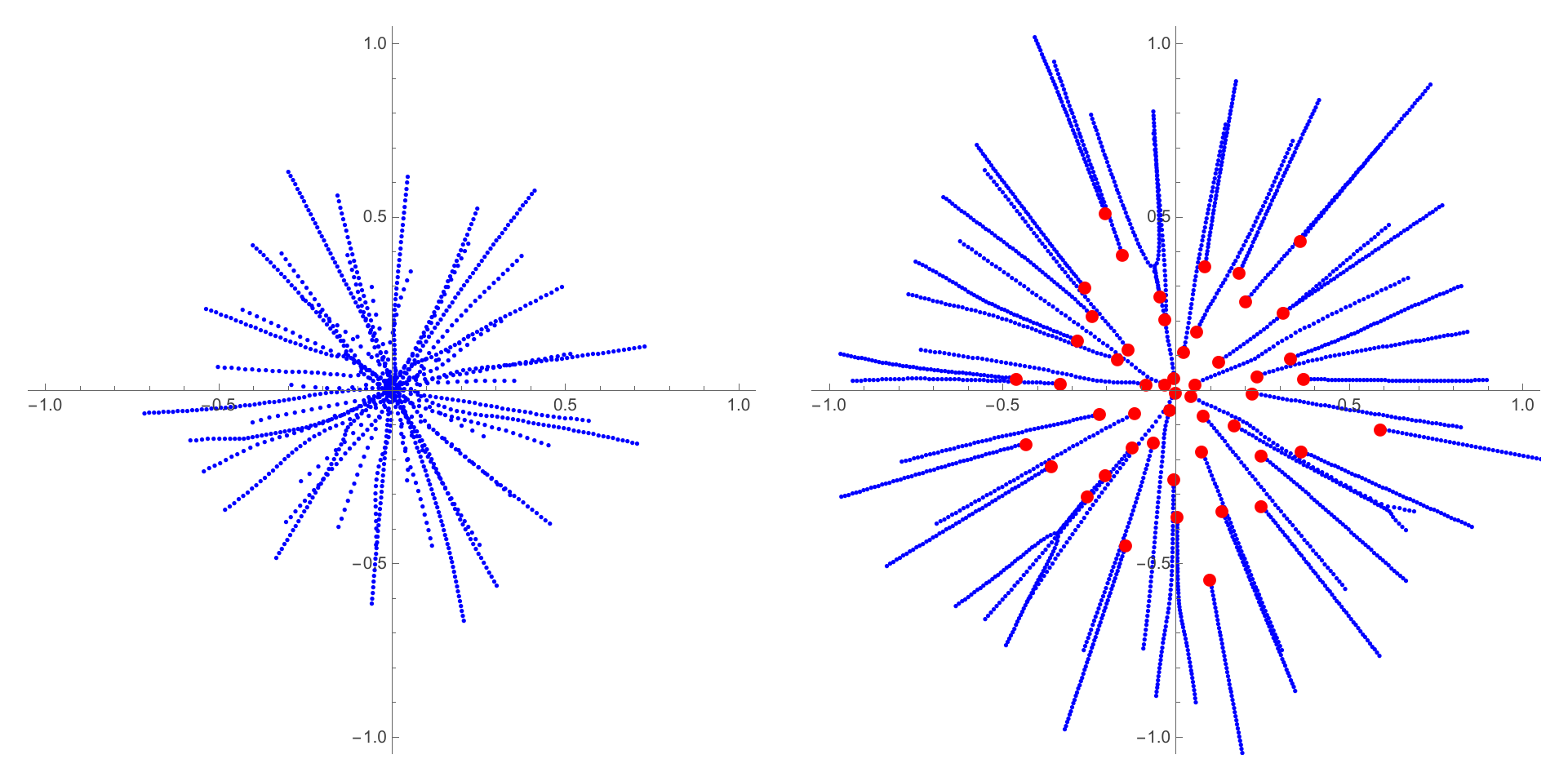}
\caption{The small blue dots show \textit{all} the derivatives of $P^N_0$ up to time $t$ and the larger red dots show the roots \textit{at} time $t$. On the left, the smaller roots move radially inward and die at the origin before time $t$. On the right, the larger roots move radially inward and survive until time $t$. Shown for $t=0.5$ and $N=100$, starting from a polynomial with roots that are approximately uniform on the unit disk.}
\label{bigandsmall.fig}
\end{figure}

An example of a result satisfying the universality conjecture comes from random polynomials with independent coefficients, of the sort studied by Kabluchko and Zaporozhets \cite{kabluchko2014}. Repeated differentiation of Kabluchko--Zaporozhets polynomials has been studied by Feng and Yao \cite{feng2019}, Hoskins and Kabluchko \cite{hoskins2023}, Campbell, O'Rourke, and Renfrew \cite{campbell2024}, and Hall, Ho, Jalowy, and Kabluchko \cite{HHJKdiff}. The universality conjecture for Kabluchko--Zaporozhets polynomials follows easily from results of \cite{feng2019}, as shown in \cite[Section 2.4]{hoskins2023}. In our notation, this result can be stated as follows:\\

\textbf{[Feng--Yao, Hoskins--Kabluchko].} \emph{If $P^N_0$ is a random polynomial of Kabluchko--Zaporozhets type, satisfying some mild regularity assumptions, then the limiting root distribution $\sigma_t$ of $Q^N_t$ has quantile function given by equation \eqref{quantile_qt}.}\\



Further evidence for the universality conjecture, in the deterministic realm, is provided by the results of Galligo, Najnudel, and Vu \cite{galligo2025dynamics} and Najnudel, and Vu \cite{najnudel2026}. They give the first rigorous analysis of the behavior under repeated differentiation of \textit{deterministic} polynomials with an asymptotically radial root distribution. Specifically, they consider polynomials of the form
\begin{equation}\label{pnm}
P_n(z)=p_n(z^{m_n}),
\end{equation}
where $p_n$ is a polynomial of degree $n$ having all real, positive roots and $m_n$ is a positive integer. Then $\deg(P_n)=nm_n$ and the roots of $P_n$ will be positive multiples of the $m_n$-th roots of unity. (See Figure \ref{uniformR.fig}.) If $\mu_0$ is any compactly supported, radial probability measure, $P_n$ can be constructed so that the empirical root distribution of $P_n$ approximates $\mu_0$ when $n$ tends to infinity. 

\begin{figure}
\centering
\includegraphics[scale=0.5]{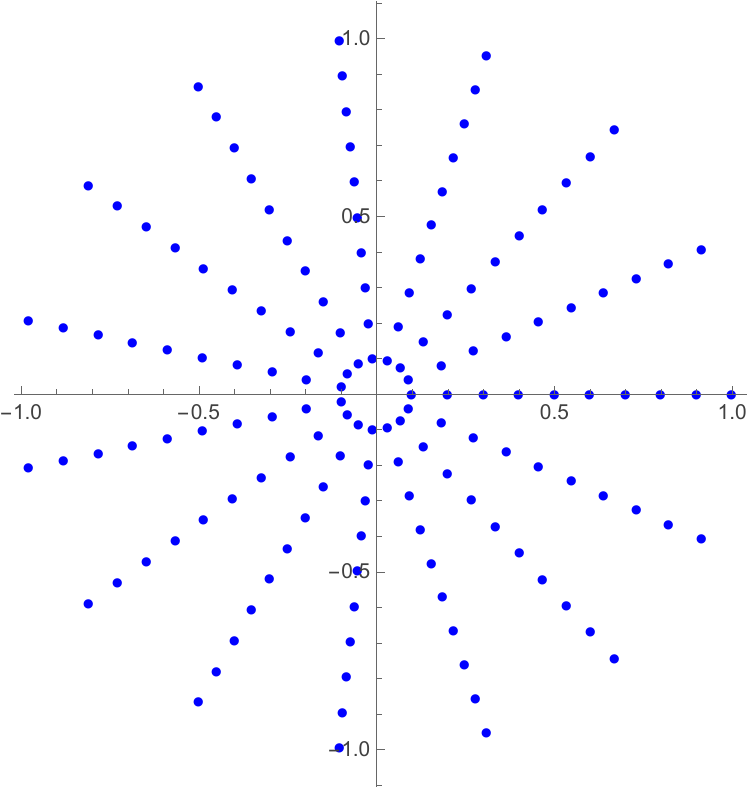}
\caption{Roots of the polynomial $P(z)=p(z^m)$ approximating the radial measure for which $r=\vert z\vert$ is uniformly distributed between 0 and 1. Here $p(z)$ is the polynomial with roots $(i/n)^m$, for $i=1,\ldots,n$. Shown for $n=10$ and $m=15$. }
\label{uniformR.fig}
\end{figure}

The main result of \cite{galligo2025dynamics}, stated in a slightly different form\footnote{The result in \cite{galligo2025dynamics} is stated in terms of the measure with mass $1-t$ obtained removing mass $t$ from the origin in $\sigma_t$, as in \cite{hoskins2023}. See Eq. (7) in Theorem 4.1 in \cite[Theorem 4.1]{galligo2025dynamics}.} is the following. \\

\textbf{[Galligo--Najnudel--Vu].}
\emph{Suppose $P_n$ is a sequence of polynomials of the form \eqref{pnm} with 
\[
\lim_{n\rightarrow\infty}\frac{m_n}{n\log(n)}=\infty
\]
and suppose the empirical root measure of $P_n$ converges as $n\rightarrow\infty$ to a compactly supported probability measure $\mu_0$ with radial quantile function $\quant_{\mu_0}$. For $0\le t<1,$ define
\[
Q_{n,t}(z)=z^{\lfloor nm_nt\rfloor}\left(\frac{d}{dz}\right)^{\lfloor nm_nt\rfloor}P_n(z).
\]
Then as $n$ tends to infinity, the empirical root measure of $Q_{n,t}(z)$ tends to a measure $\sigma_t$ whose radial quantile function $\quant_{\sigma_t}$ is given by \eqref{quantile_qt}.
} 

Quite recently\footnote{While in the final stages of preparing this manuscript.} Najnudel, and Vu \cite{najnudel2026} improved the previous result by weakening the assumption on the asymptotics of $m_n$ to only requiring 
\[
\lim_{n\rightarrow\infty}\frac{m_n}{\log(n)}=\infty.
\]


\subsection{Our results}

In this paper, we develop a new method for proving results similar to those of Galligo--Najnudel--Vu \cite{galligo2025dynamics} and Najnudel--Vu \cite{najnudel2026} using ideas from finite free probability. Our main results are as follows. 
\begin{itemize}
\item We give a substantially shorter and simpler proof of the main result of \cite{galligo2025dynamics,najnudel2026} that directly yields the result under the weaker assumption  $m\gg \log(n)$.
\item We extend the result to more general flows, in which the operator $d/dz$ is replaced by $z^a(d/dz)^b$ for a non-negative integer $a$ and a positive integer $b$. 
\item We compute the limiting measure in the case $m$ is fixed, using the language of free probability.
\end{itemize}

We now state our result for repeated applications of the differential operator $z^a(d/dz)^b$, which include the case of repeated differentiation as the $a=0$, $b=1$ case. 

\begin{theorem}\label{intro.thm}
Suppose $P_n$ is a sequence of polynomials of the form $P_n(z)=p_n(z^{m_n})$ with $\deg(p_n)=n$ and $p_n$ having real, non-negative roots. Assume that
\[
\lim_{n\rightarrow\infty}\frac{m_n}{\log(n)}=\infty
\]
and suppose the empirical root measure of $P_n$ converges as $n\rightarrow\infty$ to a compactly supported probability measure $\mu_0$ with radial quantile function $\quant{\mu_0}$. Let $a$ and $b$ be non-negative integers with $b>0$. Then for $0\le t<1,$ define
\[
Q_{n,t}(z)=z^{(b-a)m_n\lfloor nt\rfloor}\left(z^a\left(\frac{d}{dz}\right)^b\right)^{m_n\lfloor nt\rfloor}P_n(z).
\]
Let 
\[
\alpha_{\min}=\max(0,t(b-a)).
\]

Then as $n$ tends to infinity, the empirical root measure of $Q_{n,t}(z)$ tends to a measure $\sigma_t$ whose radial quantile function $\quant{\sigma_t}$ is given by 
\begin{equation}\label{quantile_qtab}
\quant{\sigma_t}(\alpha)=\left\{\begin{matrix}0 & \text{if }0\le\alpha\le \alpha_{\min},\\ \quant{\mu_0}(\alpha)\left(1-\frac{t(b-a)}{\alpha}\right)^{\frac{b}{b-a}} & \text{if }\alpha_{\min}<\alpha\le 1,\end{matrix}\right.
\end{equation}
in the case $b\ne a$ and 
\[
\quant{\sigma_t}(\alpha)=\quant{\mu_0}(\alpha)e^{-at/\alpha}
\]
when $b=a$. 

\end{theorem}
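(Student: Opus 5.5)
The plan is to reduce everything to an explicit coefficient computation for a polynomial in $w=z^{m}$, and then to read off the radii of the roots from ratios of coefficients, which becomes exact on the scale $|\cdot|^{1/m}$ once $m\gg\log n$. Write $m=m_n$, $k=\lfloor nt\rfloor$, $D=z^a(d/dz)^b$, and $p_n(w)=\sum_{j=0}^n (-1)^{n-j}\coef{n-j}{x}\,w^j$, where $x_1\le\dots\le x_n$ are the roots of $p_n$ and $\coef{i}{x}$ are their elementary symmetric functions. Since $Dz^{r}=\falling{r}{b}z^{r-(b-a)}$, applying $D$ a total of $mk$ times to $z^{jm}$ and then multiplying by $z^{(b-a)mk}$ gives $c_j z^{jm}$, where
\[
c_j=\prod_{s=0}^{mk-1}\falling{jm-s(b-a)}{b}.
\]
Hence $Q_{n,t}(z)=q_n(z^m)$ with $q_n(w)=\sum_j(-1)^{n-j}c_j\coef{n-j}{x}w^j$. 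In particular, $c_j=0$ exactly when some exponent $jm-s(b-a)$ falls into $\{0,\dots,b-1\}$. For $b>a$ this happens for $j\lesssim (b-a)nt$, and it produces the roots at the origin, i.e.\ the flat part $\alpha\le\alpha_{\min}$ of \eqref{quantile_qtab}.

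\textbf{Step 1 (real-rootedness).} I will show that $q_n$ has only real non-negative roots. The intended route is finite free probability: up to normalization, $q_n=p_n\boxtimes_n r_n$, where $r_n$ has coefficients proportional to $c_j/\binom{n}{j}$. It then suffices to check that $r_n$ has non-negative roots, and the finite free multiplicative convolution preserves this class. Alternatively, each single application of $D$ to a function of the form $z^{\ell}f(z^m)$ can be handled by a Laguerre/P\'olya--Schur multiplier argument.

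\textbf{Step 2 (roots from coefficient ratios).} Let $y_1\ge\dots\ge y_n\ge0$ be the roots of $q_n$, and write $E_i=c_{n-i}\coef{i}{x}$ for the $i$-th elementary symmetric function of these roots (up to the overall normalization $c_n$). I want a lemma of the form
\[
y_i\le n^{C}\,E_i/E_{i-1}
\quad\text{and}\quad
E_i/E_{i-1}\le n^{C}\,y_i
\]
for an absolute constant $C$, at least for all $i$ outside a set of density $o(1)$. After taking $m$-th roots, the polynomial factor $n^{C/m}$ tends to $1$ because $m\gg\log n$. This is the main obstacle. The crude bounds $y_1\cdots y_i\le E_i\le\binom ni y_1\cdots y_i$ lose a factor $2^n$, which is not negligible. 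I would therefore use the Newton inequalities (log-concavity of $i\mapsto E_i/\binom ni$) together with a counting argument: the ratios $E_i/E_{i-1}$ are monotone up to a factor $n^2$, and their product telescopes to the products of roots. Together these force each ratio to match $y_i$ within polynomial factors, except on a negligible set of indices. That exceptional set is enough for weak convergence of the empirical measure.

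\textbf{Step 3 (asymptotics of $c_j$).} For $j=\alpha n$ with $\alpha>\alpha_{\min}$, I will compute
\[
\frac1m\log\frac{c_j}{c_{j-1}}
= \frac bm\sum_{s<mk}\log\frac{jm-s(b-a)}{(j-1)m-s(b-a)}+o(1)
\to \frac{b}{b-a}\log\frac{\alpha}{\alpha-t(b-a)},
\]
using a Riemann sum with $s=mu$. In the case $b=a$ the same computation gives the limit $bt/\alpha=at/\alpha$.

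\textbf{Step 4 (conclusion).} Combining Steps 2 and 3, the root of $Q_{n,t}$ with index $\approx\alpha$ in ascending radial order has modulus
\[
y^{1/m}\approx\left(\frac{\coef{n-j+1}{x}}{\coef{n-j}{x}}\right)^{1/m}\left(\frac{c_{j-1}}{c_j}\right)^{1/m}.
\]
Applying Step 2 with $t=0$ identifies the first factor with $\quant{\mu_0}(\alpha)$, using the assumed convergence of the roots of $P_n$. The second factor tends to $\left(1-t(b-a)/\alpha\right)^{b/(b-a)}$, or to $e^{-at/\alpha}$ when $b=a$. This gives \eqref{quantile_qtab}. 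Finally, each modulus appears with $m$ equally spaced arguments, so the empirical measure is asymptotically radial, and convergence of the radial quantile functions at continuity points yields weak convergence to $\sigma_t$.
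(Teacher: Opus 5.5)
Your architecture matches the paper's. You write $Q_{n,t}=q_n(z^m)$, note that the coefficients are multiplied by the explicit factors $c_j$ (Lemma~\ref{lem:coefq.coefp.gen}), evaluate $(c_{j-1}/c_j)^{1/m}$ by a Riemann sum (Lemma~\ref{abProd.lem}; your Step 3 computation is correct), and squeeze. Step 1 is acceptable in spirit. However, your free-convolution route only relocates the problem to $r_n$, which is the same real-rootedness statement for $p=(w-1)^n$. The paper instead iterates Lemma~\ref{lem:GNV.differentiation}.

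The genuine gap is Step 2. You rightly call it the crux, but you do not prove it, and the route you sketch cannot prove it.

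\textbf{Why telescoping does not work.} The identity $\prod_{i\le j}E_i/E_{i-1}=E_j$ controls partial products only up to the factor $\binom nj$. That is exactly the exponential loss you set out to avoid. Monotonicity does not localize this loss to individual indices. Write $d_i=\log(E_i/E_{i-1})-\log y_i$. The two facts you invoke are that both sequences are non-increasing and that $0\le\sum_{i\le j}d_i\le\log\binom nj$. These are compatible with $d_i$ alternating between $\pm\varepsilon m$ (starting with $+$) on every index in $[n/4,3n/4]$, with $d_i=0$ elsewhere. To see this, take $y_i=e^{-Ki}$ with $K>2\varepsilon m$ and $m=o(n)$. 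So neither polynomial-factor control nor an exceptional set of density $o(1)$ follows without further input you have not supplied. Moreover, for $q_n$ the $y_i$ are precisely the unknowns.

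\textbf{The missing idea.} The comparison holds at every index by a short counting argument, which is Theorem~\ref{ekbounds.thm}. For $y_1\ge\dots\ge y_n\ge0$,
\[
E_k\le (n-k+1)\,y_k\,E_{k-1}\qquad\text{and}\qquad y_k\,E_{k-1}\le k\,E_k .
\]
For the first inequality, every term $y_{i_1}\cdots y_{i_k}$ with $i_1<\dots<i_k$ has $i_k\ge k$, hence $y_{i_k}\le y_k$. Each $(k-1)$-subset $\{i_1<\dots<i_{k-1}\}$ has $n-i_{k-1}\le n-k+1$ extensions. For the second, each $(k-1)$-subset misses some $j\le k$, for which $y_j\ge y_k$, and each $k$-subset arises in this way at most $k$ times. (The paper instead gets the second inequality by applying the first to the reversed polynomial.)

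This is your lemma with $C=1$ and no exceptional set. Since $n^{2/m}\to1$ when $m/\log n\to\infty$, your Steps 3 and 4 then go through as written.
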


Note that one can easily obtain the limiting root distribution $\mu_t$ of $(z^a\left(d/dz\right)^b)^{\lfloor nm_nt\rfloor}P_n(z)$ (without the leading power of $z$) from the measure $\sigma_t$. To do so, we either remove mass at the origin from $\sigma_t$ (when $a<b$) or add mass at the origin (when $a>b$) and then rescale the result to be a probability measure. 

by subtracting $t\delta_0$ and then dividing by $1-t$.

Note also that in the case of repeated differentiation (the $a=0$, $b=1$ case), the formula for the quantile function in \eqref{quantile_qtab} reduces to the one in \eqref{quantile_qt}. 

\begin{remark}\label{CompareDiffAB.rem}The relationship between $\quant_{\mu_0}$ and $\quant_{\sigma_t}$ is the same as in \cite{HHJKdiff}, where the operator $z^a(d/dz)^b$ is applied repeatedly to random polynomials of Kabluchko--Zaporozhets type. To verify this claim, note that the quantile function for a Kabluchko--Zaporozhets polynomial is computed from the ``exponential profile'' $g$ as $q(\alpha)=e^{-g'(\alpha)}$, as in \cite[Theorem 3.4]{HHJKdiff}. The above formula for $\quant_{\sigma_t}$ then follows from \cite[Theorem 3.7]{HHJKdiff}. See also \cite[Section 6.2]{HHJKdiff}. We see, then, that Theorem \ref{intro.thm} is an instance of the expected universality of the evolution of polynomial roots under differential operators. 
\end{remark}

We can interpret the theorem in terms of the motion of the individual roots of $(z^a(d/dz)^b)^{m_n\lfloor nt\rfloor})P_n(z)$ in three cases, according to whether the operator $z^a(d/dz)^b$ decreases, increases, or preserves the degree of monomials. If $j\approx\alpha N$, then we expect that the $j$-th smallest root $z_j$ of $P^N_0$ evolves as follows.

\begin{itemize}
\item In the degree-decreasing case, $a<b$, the roots move radially inward according to the formula
\begin{equation}\label{rootmotion}
z_j(t)\approx z_j \left(1-\frac{t(b-a)}{\alpha}\right)^{\frac{b}{b-a}}
\end{equation}
until they reach the origin and die. 
\item In the degree-increasing case $a>b$, the roots move radially inward according to \eqref{rootmotion} without reaching the origin, while new roots are being created at the origin.
\item In the degree-preserving case $a=b$, the roots move radially inward according to 
\[
z_j(t)\approx z_je^{-at/\alpha}.
\]
\end{itemize}
This picture of the root motion can be understood rigorously as a push-forward theorem that expresses the measure $\sigma_t$ as the push-forward of $\mu_0$ under a ``transport map'' defined using the above formulas. See Theorem 4.4 in \cite{HHJKdiff}.

Our main tool in the proof of Theorem \ref{intro.thm} is an estimate on the $k$-th largest root of a polynomial $p$ of degree $n$ with real, positive roots in terms of its coefficients, which is of independent interest. Let $a_k$ be the coefficient of $x^k$ in the polynomial $p$ and introduce the rescaled coefficients $\coef{k}{p}$ as follows:
\begin{equation}\label{ekdef}
\coef{k}{p}=(-1)^k\binom{n}{k}^{-1}a_{n-k}.
\end{equation}
Notice the implicit dependence of $\coef{k}{p}$ on the degree of $p$. Then we have the following result.
\begin{theorem}\label{ekbounds.thm} Let $p$ be a polynomial of degree $n$ with real, positive roots and define the coefficients $\coef{k}{p}$ as in \eqref{ekdef}. Then the $k$-th largest root $\lambda_k$ of $p$ satisfies
\[
\frac{1}{k}\frac{\coef{k}{p}}{\coef{k-1}{p}}\le \lambda_k\le (n-k+1)\frac{\coef{k}{p}}{\coef{k-1}{p}},\quad 1\le k\le n.
\]
\end{theorem}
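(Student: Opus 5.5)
The plan is to rewrite both inequalities in terms of the ordinary elementary symmetric polynomials of the roots and then prove each one by a direct injection-type counting argument on index subsets. Write the roots as $\lambda_1\ge\lambda_2\ge\cdots\ge\lambda_n>0$ and let $E_k$ denote the $k$-th elementary symmetric polynomial in $\lambda_1,\dots,\lambda_n$, with $E_0=1$. Since $p$ may be taken monic (both sides of the claimed inequality are invariant under scaling $p$), Vieta gives $(-1)^k a_{n-k}=E_k$, so $\coef{k}{p}=\binom{n}{k}^{-1}E_k$. Using $\binom{n}{k}/\binom{n}{k-1}=(n-k+1)/k$, we get
\[
\frac{\coef{k}{p}}{\coef{k-1}{p}}=\frac{k}{n-k+1}\,\frac{E_k}{E_{k-1}}.
\]
Hence the statement of Theorem \ref{ekbounds.thm} is equivalent to the pair of inequalities
\[
E_k\le (n-k+1)\,\lambda_k\,E_{k-1}\qquad\text{and}\qquad \lambda_k\,E_{k-1}\le k\,E_k .
\]
All terms in these sums are positive products, so termwise comparisons suffice.

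For the inequality $\lambda_k E_{k-1}\le kE_k$, I will expand $\lambda_kE_{k-1}=\sum_{|S|=k-1}\lambda_k\prod_{i\in S}\lambda_i$. For each $(k-1)$-subset $S$, some index $j\in\{1,\dots,k\}$ is missing from $S$, and for it $\lambda_j\ge\lambda_k$. Choosing, say, the smallest such $j$, we obtain
\[
\lambda_k\prod_{i\in S}\lambda_i\le \prod_{i\in S\cup\{j\}}\lambda_i .
\]
The map $S\mapsto S\cup\{j\}$ lands in the $k$-subsets, and any $k$-subset $T$ has at most $k$ preimages, since each preimage is $T$ with one element removed. Summing the termwise bounds therefore gives $\lambda_kE_{k-1}\le kE_k$.

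For the inequality $E_k\le(n-k+1)\lambda_kE_{k-1}$, I will take a $k$-subset $T$ and let $i=\max T$. Since only $k-1$ indices lie below $k$, we have $i\ge k$, so $\lambda_i\le\lambda_k$. This gives
\[
\prod_{T}\lambda\le\lambda_k\prod_{T\setminus\{i\}}\lambda .
\]
The map $T\mapsto T\setminus\{i\}$ sends $k$-subsets to $(k-1)$-subsets. A given $S$ is recovered from $S\cup\{i\}$ with $i>\max S$ and $i\ge k$, so $i\in\{k,\dots,n\}$, which leaves at most $n-k+1$ preimages. Summing yields the bound.

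There is no real obstacle here. The only points requiring care are the bookkeeping of the binomial normalization in \eqref{ekdef} and making sure the preimage counts are exactly $k$ and $n-k+1$. The positivity of the roots is what makes the termwise comparison legitimate.
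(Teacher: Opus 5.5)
Your argument is correct. The reduction to $E_k\le (n-k+1)\lambda_kE_{k-1}$ and $\lambda_kE_{k-1}\le kE_k$ matches the paper, and so does your proof of the first inequality: replace $\lambda_{\max T}$ by $\lambda_k$, then note that a $(k-1)$-subset has at most $n-k+1$ extensions by a larger index. That is exactly how the paper proves the lower bound.

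The difference is in the upper bound. The paper does no new counting there. It applies the lower bound to the reversed polynomial $p^*(x)=(-x)^np(1/x)$, whose roots are $1/\lambda_1\le\cdots\le 1/\lambda_n$, at its $(n-k+1)$-th largest root $1/\lambda_k$. Using $\coef{n-k}{p^*}=\coef{k}{p}$ and inverting gives $\lambda_k\le (n-k+1)\coef{k}{p}/\coef{k-1}{p}$.

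This duality makes the upper bound a one-line corollary and explains why the constants $1/k$ and $n-k+1$ trade places. However, it needs every root to be invertible, which is why the paper treats roots at $0$ separately in Remark~\ref{ekbounds.rem.zero} by factoring out $z^l$.

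Your direct injection instead adjoins the smallest index $j\le k$ missing from $S$, so each $k$-subset has at most $k$ preimages. This is self-contained. Since both of your termwise comparisons use only $\lambda_i\ge 0$, the bounds hold verbatim for non-negative roots whenever $E_{k-1}>0$. This covers the range $k\le n-l$ of Remark~\ref{ekbounds.rem.zero} without the detour through $\hat q$.
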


We should emphasize the constants $1/k$ and $n-k+1$ are the best possible for a bound of this form, see Remark \ref{ekbounds.rem.tight} below. Notice also that the result still holds if the polynomial has roots at 0, see Remark \ref{ekbounds.rem.zero}.

The intuition behind Theorem \ref{ekbounds.thm} comes from finite free probability. Specifically, in \cite[Proof of Theorem 3.2]{fujie2023law} Fujie and Ueda used a similar (weaker) bound, to compute the finite free multiplicative law of large numbers. The ratio of consecutive coefficients is also fundamental in \cite{arizmendi2026s} for studying a finite version of the $S$-transform.

While the estimates in \cite{galligo2025dynamics, najnudel2026} are quite involved, our proof of Theorem \ref{intro.thm} will be a fairly easy consequence of the bounds in Theorem \ref{ekbounds.thm}.

\section{Bounding the roots in terms of ratios of consecutive coefficients}
In this section, we prove Theorem \ref{ekbounds.thm}, which is a simple but powerful bound of the roots of a polynomial in terms of the ratio of consecutive coefficients. This basic result will be our key ingredient to study repeated differentiation in the forthcoming sections. The proof relies on using Vieta's formula to write the coefficients as symmetric sums of the roots and then using simple bounds.



\begin{proof}[Proof of Theorem \ref{ekbounds.thm}]
Notice that for any constant $c\neq 0$, $cp$ has the same roots as $p$, and by linearity $$\frac{\coef{k}{c p}}{\coef{k-1}{c p}}=\frac{c\,\coef{k}{p}}{c\,\coef{k-1}{p}} =\frac{\coef{k}{p}}{\coef{k-1}{p}},$$
so we can assume without loss of generality that $p$ is monic. In this case, by the Vieta formulas we have
$$\coef{k}{p}=\frac{1}{\binom{n}{k}} \sum_{1\leq i_1 <i_2 <\cdots <i_k\leq n} \lambda_{i_1}\lambda_{i_2} \dots \lambda_{i_k}.$$
Since all the involved quantities are positive, after reorganizing, using that $\binom{n}{k}=\frac{n-k+1}{k}\binom{n}{k-1}$, and canceling similar terms, the lower bound $\frac{1}{k} \frac{\coef{k}{p}}{\coef{k-1}{p}}  \leq  \lambda_k $ is equivalent to
\begin{equation} 
\label{eq.ineq}
\sum_{1\leq i_1 <\cdots <i_k\leq n} \lambda_{i_1}\dots \lambda_{i_k} \leq \lambda_k (n-k+1) \sum_{1\leq i_1 <\cdots <i_{k-1}\leq n} \lambda_{i_1}\dots \lambda_{i_{k-1}}.
\end{equation}
To prove this, observe that since $i_1 <\cdots <i_k$, then $k\leq i_k$ and thus $ \lambda_{i_k} \leq \lambda_k$. Notice also that $k-1\leq i_{k-1}$, so $n-i_{k-1}\geq n-k+1$. Then, starting from the left hand side we compute
\begin{align*}
\sum_{1\leq i_1 <\cdots <i_k\leq n} \lambda_{i_1}\dots \lambda_{i_k} &\leq \sum_{1\leq i_1 <\cdots <i_k\leq n} \lambda_{i_1}\dots \lambda_{i_{k-1}} \lambda_k \\
& = \sum_{1\leq i_1 <\cdots <i_k\leq n} \lambda_{i_1}\dots \lambda_{i_{k-1}} \sum_{i_k=i_{k-1}+1}^n \lambda_k  \\
& = \sum_{1\leq i_1 <\cdots <i_{k-1}<  n} \lambda_{i_1}\dots \lambda_{i_{k-1}} (n-i_{k-1})\lambda_k\\
& \leq \sum_{1\leq i_1 <\cdots <i_{k-1}<  n} \lambda_{i_1}\dots \lambda_{i_{k-1}} (n-k+1) \lambda_k.
\end{align*}
This proves \eqref{eq.ineq}, and thus the lower bound for $\lambda_k$.

To establish the upper bound, we consider the polynomial
$$p^*(x):=(-x)^n p(1/x)= \sum_{k=0}^n  x^{k} \binom{n}{k} (-1)^{n-k} \coef{k}{p},$$
that has roots $0<\frac{1}{\lambda_1}\leq  \dots \leq \frac{1}{\lambda_n}$. Applying the lower bound to $\frac{1}{\lambda_k}$, that is the $(n-k+1)$-th largest root of $p^*$, and using that $\coef{n-k}{p^*}=\coef{k}{p}$ yields
$$\frac{1}{n-k+1} \frac{\coef{k-1}{p}}{\coef{k}{p}}= \frac{1}{n-k+1} \frac{\coef{n-k+1}{p^*}}{\coef{n-k}{p^*}}  \leq  \frac{1}{\lambda_k}.$$
Taking inverses yields the desired upper bound on $\lambda_k$.
\end{proof}

\begin{remark}[Constants are tight]
\label{ekbounds.rem.tight}
We  now explain why the constants $1/k$ and $n-k+1$ are the best possible for a bound of this form. For the upper bound, consider $\lambda_1=\lambda_2=\dots=\lambda_{k-1}=\lambda_k$ and let $\lambda_{k+1},\ldots,\lambda_n\to 0$. Then 
$$\coef{k}{p} \to \frac{\lambda_k^k}{\binom{n}{k}} \qquad \text{and} \qquad \coef{k-1}{p}\to \frac{k\lambda_k^{k-1}}{\binom{n}{k-1}},\qquad \text{so} \qquad (n-k+1)\frac{\coef{k}{p}}{\coef{k-1}{p}} \to \lambda_k.$$
Similarly, for the lower bound, if $\lambda_{k+1},\ldots,\lambda_n$ are equal to $\lambda_k$ and $\lambda_1,\ldots,\lambda_{k-1}$ tend to infinity, then 
$$\frac{1}{k}\frac{\coef{k}{p}}{\coef{k-1}{p}}\to\lambda_k.$$ 
\end{remark}

\begin{remark}[We may have roots at 0]
\label{ekbounds.rem.zero}
If $q(z)=z^{l}\hat{q}(z)$ is a polynomial of degree $n$ with a root at 0 of multiplicity $l$ and $n-l$ positive roots $\lambda_1\geq \lambda_2 \geq \dots \geq \lambda_{n-l} > 0$. Then, for the positive roots we still have the bounds
$$\frac{1}{k} \frac{\coef{k}{q}}{\coef{k-1}{q}}  \leq  \lambda_k \leq (n-k+1)\frac{\coef{k}{q}}{\coef{k-1}{q}} \qquad \text{for }k=1,\dots,n-l.$$
The proof follows from applying the previous result to $\hat{q}$ and using the relation $\coef{k}{\hat{q}}= \frac{\falling{n}{k}\coef{k}{q}}{\falling{n-l}{k}}$.
\end{remark}

\section{Repeated differentiation}\label{repeatedDiff.sec}

In this section, we will prove Theorem \ref{intro.thm} in the special case $a=0$ and $b=1$, which corresponds to repeated differentation. Since the argument simplifies noticeably in this case, we present this case first and then give the proof for general values of $a$ and $b$ in Section \ref{ab.sec}.

\subsection{Problem set-up}\label{DiffSetup.sec}
We begin by introducing notation for polynomials and their derivatives.

\begin{notation}
\label{not:setting.derivatives}
We fix positive integers $n$, $m$ and $l\leq n$. We now introduce several polynomials, each of which depends on at least one of $n$, $m$, and $l$. To simplify the notation, we omit the indices in the name of the polynomials. Thus, throughout this subsection we consider:
\begin{itemize}
    \item $\pp(z)$ is a polynomial of degree $n$ with real, non-negative roots.
    \item $\PP(z):=\pp(z^m)$ is a polynomial of degree $mn$.
    \item $\QQ(z):=z^{ml} \PP^{(ml)}(z)$ is a polynomial of degree $mn$.
    \item $\qq(z)$ is the polynomial of degree $n$ such that $\QQ(z)=\qq(z^m)$.
\end{itemize}
\end{notation}

Notice that $\QQ$ is obtained after differentiating $ml$ times $\PP$, and adding $ml$ roots at zero to make the degree $nm$ again. This in turn directly implies that $\qq$ has a root at 0 of multiplicity at least $l$. One could work directly with $\PP^{(ml)}(z)$ but it makes the formulas cleaner to keep the degrees of $Q$ and $q$ equal to those of $P$ and $p$, respectively. 

As a first step in the argument, we need to know that the roots of $q$ are again non-negative. This claim follows from 
\cite[Lemma 3.1]{galligo2025dynamics}.

\begin{lemma}[Galligo--Najnudel--Vu]
\label{lem:GNV.differentiation}
Suppose $k$ is a non-negative integer and $m$ is a positive integer and suppose $P$ is a polynomial of the form
\[
P(z)=z^k p(z^m),
\]
where $p$ is a nonconstant polynomial with real, positive roots. Then $P'(z)$ has the form
\[
P'(z)=mz^{m-1}p'(z^m),\quad (k=0)
\]
and
\[
z^{k-1}\tilde p(z^m),\quad (k>0),
\]
where $\deg(\tilde p)=\deg(p)$ and where $\tilde p$ has all real, positive roots.
\end{lemma}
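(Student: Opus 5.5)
For $k=0$ the claim is just the chain rule, $\frac{d}{dz}p(z^m)=mz^{m-1}p'(z^m)$, so I assume $k>0$. The plan is to compute $P'$ explicitly, read off $\tilde p$, and then locate all $n=\deg p$ roots of $\tilde p$ by a Rolle-type argument on $(0,\infty)$.

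\textbf{Step 1: identify $\tilde p$.} Differentiating gives
\[
P'(z)=kz^{k-1}p(z^m)+mz^{k+m-1}p'(z^m)=z^{k-1}\tilde p(z^m),
\]
where
\[
\tilde p(w)=k\,p(w)+m\,w\,p'(w).
\]
If $p(w)=c\prod_{i=1}^n(w-\lambda_i)$, the coefficient of $w^n$ in $\tilde p$ is $c(k+mn)\neq0$, so $\deg\tilde p=n$. Also $\tilde p(0)=k\,p(0)\neq 0$, because $k>0$ and $0$ is not a root of $p$. Hence $\tilde p$ does not vanish at $0$.

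\textbf{Step 2: a real-analytic reformulation.} On $(0,\infty)$ set $f(w)=w^{k/m}p(w)$. A direct computation gives
\[
f'(w)=\tfrac1m\,w^{k/m-1}\tilde p(w).
\]
So on $(0,\infty)$ the zeros of $\tilde p$ coincide, with multiplicity, with the zeros of $f'$. Let the distinct roots of $p$ be $0<\mu_1<\dots<\mu_d$ with multiplicities $r_1,\dots,r_d$, so that $\sum r_i=n$. The roots of $\tilde p$ come from three sources:
\begin{itemize}
\item Each $\mu_i$ is a zero of $f$ of order $r_i$, hence a zero of $f'$ (equivalently of $\tilde p$) of order at least $r_i-1$.
\item Rolle's theorem on each interval $(\mu_i,\mu_{i+1})$ gives a zero of $f'$ there, contributing $d-1$ further roots.
\item Since $k>0$, $f$ extends continuously to $[0,\infty)$ with $f(0)=0$. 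Rolle's theorem on $(0,\mu_1)$ then gives one more root, located in $(0,\mu_1)$.
\end{itemize}

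\textbf{Step 3: count.} These sources are pairwise disjoint and together contribute at least
\[
\sum_{i=1}^d (r_i-1)+(d-1)+1=n
\]
roots in $(0,\infty)$, counted with multiplicity. Since $\deg\tilde p=n$, these are all the roots of $\tilde p$, and every one of them is real and positive.

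\textbf{Main obstacle.} The only delicate point is the extra root in $(0,\mu_1)$. This is exactly where $k>0$ is used: it makes $f(0)=0$, so that Rolle applies on $(0,\mu_1)$. Without it the count falls short by one, which matches the $k=0$ case where the degree drops. A secondary check is that the multiplicities in the first source are correct. This holds because at $\mu_i$ we have $\tilde p=kp+mwp'$ with $p$ vanishing to order $r_i$ and $wp'$ vanishing to order $r_i-1$ (since $\mu_i\neq0$), so $\tilde p$ vanishes to order at least $r_i-1$. The counting argument in Step 3 then shows these orders are exact.
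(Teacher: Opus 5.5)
Your proof is correct, but it uses a different mechanism from the paper's. The two routes also handle repeated roots differently.

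\emph{The paper's route.} The paper works with $\tilde p$ directly. It first assumes the roots $\lambda_1\ge\cdots\ge\lambda_n$ of $p$ are distinct and normalizes $p(0)>0$. It then observes that the values $\tilde p(0)=kp(0)$ and $\tilde p(\lambda_j)=m\lambda_j p'(\lambda_j)$ alternate in sign. The intermediate value theorem gives a root of $\tilde p$ in each of $(0,\lambda_n),(\lambda_n,\lambda_{n-1}),\ldots,(\lambda_2,\lambda_1)$. Repeated roots are then handled by an unspecified ``standard limiting argument.''

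\emph{Your route.} You introduce the integrating factor $w^{k/m}$. This exhibits $\tilde p$, up to a factor that is smooth and nonvanishing on $(0,\infty)$, as the derivative of $f(w)=w^{k/m}p(w)$. Rolle's theorem then does the work, and the hypothesis $k>0$ enters through $f(0)=0$ rather than through the sign of $\tilde p(0)$.

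\emph{What each buys.} Your multiplicity count treats repeated roots of $p$ directly. No approximation step is needed. Making the paper's limiting argument precise would require continuity of roots in the coefficients, together with a check that no limiting root of $\tilde p$ collapses to $0$ (which uses $\tilde p(0)=kp(0)\neq 0$). The paper's sign-alternation argument, in exchange, uses only the intermediate value theorem on the polynomial $\tilde p$ itself and never leaves the polynomial setting. Both arguments produce the same interlacing conclusion: one root of $\tilde p$ below the smallest root of $p$, one in each gap between consecutive distinct roots, and the remaining roots at the multiple roots of $p$.
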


We include the proof for the convenience of the reader.
\begin{proof}
The case $k=0$ is clear. For $k>0$, direct calculation gives the claimed form with 
\[
\tilde p(w)=kp(w)+mwp'(w),
\]
so that $\deg(\tilde p)=\deg(p)$. 

We assume at first that the roots $\lambda_1\ge \lambda_2\ge\ldots\ge \lambda_n$ of $p$ are distinct (with $n=\deg(p)$). Since the roots of $p$ are positive, $p(0)$ is not zero; without loss of generality, we assume $p(0)>0$. We then evaluate $\tilde p$ at the following points: $0,\lambda_n,\lambda_{n-1},\ldots,\lambda_1$. We first note that $\tilde p(0)=kp(0)>0$. Then since the roots of $p$ are distinct, $p'(\lambda_j)\ne 0$. Since $p$ crosses from positive to negative at $\lambda_n$, we actually have $p'(\lambda_n)<0$, so that $\tilde p(\lambda_n)=m\lambda_np'(\lambda_n)<0.$ Then $p$ crosses from negative to positive at $\lambda_{n-1}$, so that $\tilde p(\lambda_{n-1})<0$. The signs of $\tilde p(\lambda_k)$ then alternate. Thus, $\tilde p$ has at least one real root in each of the $n$ open intervals $(0,\lambda_n),(\lambda_n,\lambda_{n-1}),\ldots,(\lambda_2,\lambda_1)$. 

A standard limiting argument handles the case where the roots of $p$ are not distinct. 
\end{proof}

\begin{corollary}
The polynomial $q$ in Notation \ref{not:setting.derivatives} has real, non-negative roots. 
\end{corollary}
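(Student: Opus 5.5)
The plan is to apply Lemma \ref{lem:GNV.differentiation} once for each of the $ml$ derivatives and track the form of the polynomial through the iteration. The invariant will be that every intermediate derivative has the form $z^k r(z^m)$, where $r$ has real, non-negative roots and $0\le k\le m-1$.

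First I would reduce to the case where $p$ has strictly positive roots. If $p$ has a root of multiplicity $j$ at $0$, write $p(w)=w^j p_0(w)$ with $p_0(0)\neq 0$. Then $P(z)=z^{mj}p_0(z^m)$, which is already of the form $z^k p_0(z^m)$, except that now $k$ may be at least $m$. Rather than doing this reduction, it is cleaner to note that the lemma's proof works verbatim for any $k\ge 0$: the identity $\tilde p(w)=kp(w)+mwp'(w)$ holds in general. So I would state the invariant as ``$z^k r(z^m)$ with $r$ having positive roots and $k\ge0$ arbitrary.''

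Next comes the iteration. Start from $P=z^{k_0}r_0(z^m)$, where $r_0$ has all positive roots. One derivative produces either $z^{k-1}\tilde r(z^m)$ when $k>0$, or $mz^{m-1}r'(z^m)$ when $k=0$. The second case uses Rolle's theorem: $r'$ has real roots interlacing those of $r$, so they are positive, or $r'$ is constant. In either case the new polynomial again has the form $z^{k'}r'(z^m)$ with $r'$ having real positive roots, or it is a monomial. After $ml$ steps we get $P^{(ml)}(z)=z^{K}s(z^m)$, where $s$ has positive roots or is constant. Multiplying by $z^{ml}$ gives $Q(z)=z^{K+ml}s(z^m)$. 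Since $Q$ is a polynomial in $z^m$ (the operators $z\,d/dz$ preserve residues mod $m$, and $z^{ml}(d/dz)^{ml}$ is a polynomial in $z\,d/dz$), $m$ divides $K+ml$. Hence $q(w)=w^{(K+ml)/m}s(w)$, which has real, non-negative roots.

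The only delicate points are the mild ones. One is the degenerate case where $r$ becomes constant, which just yields a monomial and is therefore harmless. The other is the divisibility of the exponent, which I would justify either as above or by tracking exponents mod $m$: each step lowers the exponent by one while the $m$-power structure is preserved.
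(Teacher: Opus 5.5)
Your proposal is correct and follows essentially the paper's route: iterate Lemma \ref{lem:GNV.differentiation} through all $ml$ derivatives, using Rolle's theorem for the $k=0$ step. The paper groups the derivatives in blocks of $m$, so that $P^{(m)}(z)=\hat p(z^m)$ directly and no final divisibility argument is needed; your explicit factoring $p(w)=w^jp_0(w)$ handles zero roots of $p$ a bit more carefully than the paper's one-line proof, since the lemma is stated for positive roots.
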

\begin{proof}
If we apply Lemma \ref{lem:GNV.differentiation} $m$ times starting from the polynomial $P(z)=p(z^m)$, where $p$ has real, non-negative roots, we find that 
\[
P^{(m)}(z)=\hat p(z^m),
\]
where $\deg(\hat p)=\deg(p)-1$ and $\hat p$ again has real, non-negative roots. 

Applying this result $l$ times shows that $P^{(ml)}(z)=r(z^m)$, where $r$ has real, non-negative roots. Then the polynomial $q$ in Notation \ref{not:setting.derivatives} will be given by $q(u)=u^lr(u)$, so that $u$ has real, non-negative roots (with at least $l$ roots at 0). 
\end{proof}

\subsection{Coefficient estimates for a fixed degree}

In this subsection we fix the degree of the polynomial and the number of derivatives, and study how the coefficients and roots of the polynomials $p$ and $q$ relate. 

\begin{lemma}
\label{lem:coefq.coefp}
The coefficients of $\pp$ and $\qq$ as defined in Notation \ref{not:setting.derivatives} are related as follows.
\begin{equation}
\label{eq:coefq.coefp}
\coef{k}{\qq}=\falling{mn-mk}{ml}\, \coef{k}{\pp},\qquad \text{for } 0\leq k\leq n.    
\end{equation}
Furthermore, the ratio of consecutive coefficients satisfies
\begin{equation}
\label{eq:ratio.coefq.coefp}
\frac{\coef{k}{\qq}}{\coef{k-1}{\qq}}=\frac{\coef{k}{\pp}}{\coef{k-1}{\pp}} \prod_{i=1}^{m} \frac{ n-k-l+\frac{i}{m}}{n-k+\frac{i}{m}},\qquad \text{for } 1\leq k\leq n-c_\qq,
\end{equation}
where $c_\qq$ is the multiplicity of the root 0 of $\qq$.
\end{lemma}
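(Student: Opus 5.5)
Write $p(w)=\sum_{j=0}^n a_j w^j$, so that $P(z)=\sum_j a_j z^{mj}$. Differentiating $ml$ times and multiplying by $z^{ml}$ acts diagonally on monomials:
\[
z^{ml}\left(\tfrac{d}{dz}\right)^{ml} z^{mj}=\falling{mj}{ml}\, z^{mj},
\]
where $\falling{x}{r}=x(x-1)\cdots(x-r+1)$ is the falling factorial. This vanishes automatically when $mj<ml$. Hence $Q(z)=\sum_j \falling{mj}{ml} a_j z^{mj}$, and therefore
\[
q(w)=\sum_{j=0}^n \falling{mj}{ml}\, a_j w^j .
\]
Since $\deg q=n$, the normalization \eqref{ekdef} uses the same binomial $\binom{n}{k}$ and the same sign $(-1)^k$ for both $p$ and $q$. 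Taking $j=n-k$ then gives $\coef{k}{q}=\falling{mn-mk}{ml}\coef{k}{p}$, which is \eqref{eq:coefq.coefp}. One small point to check is that the leading coefficient of $q$ is $\falling{mn}{ml}a_n\ne0$, so $q$ genuinely has degree $n$; this holds because $l\le n$.

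For \eqref{eq:ratio.coefq.coefp}, first fix the admissible range of $k$. If $c_q$ is the multiplicity of the root $0$ of $q$, then $\coef{k}{q}\neq 0$ exactly when $k\le n-c_q$, because $q$ has nonnegative roots and Vieta's formulas express these coefficients as positive sums. Consequently $\coef{k-1}{q}\ne0$ for $1\le k\le n-c_q$, and by \eqref{eq:coefq.coefp} also $\coef{k-1}{p}\neq0$ and $\falling{m(n-k+1)}{ml}\ne 0$, so the ratio is well defined. Moreover, since $\coef{k}{q}\ne0$, the factor $\falling{m(n-k)}{ml}$ is nonzero, which forces $n-k\ge l$.

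Dividing the two instances of \eqref{eq:coefq.coefp} gives
\[
\frac{\falling{m(n-k)}{ml}}{\falling{m(n-k+1)}{ml}}
=\frac{\prod_{s=0}^{ml-1}\bigl(m(n-k)-s\bigr)}{\prod_{s=0}^{ml-1}\bigl(m(n-k)+m-s\bigr)} .
\]
Write $N=m(n-k)$. The numerator is the product of the integers in $(N-ml,N]$, and the denominator is the product of the integers in $(N+m-ml,N+m]$. The two ranges overlap in $(N+m-ml,N]$, which is nonempty or empty as $l\ge1$ allows. After cancelling the overlap, the numerator keeps the integers in $(N-ml,N-ml+m]$ and the denominator keeps those in $(N,N+m]$. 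These are $N-ml+i$ and $N+i$ for $i=1,\dots,m$. Since $N\ge ml$, none of the cancelled factors are zero. Dividing each factor by $m$ gives $\frac{n-k-l+i/m}{n-k+i/m}$, and taking the product over $i=1,\dots,m$ yields the claimed formula.

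The only real obstacle is this telescoping bookkeeping, together with the degenerate case $l\ge n-k+1$ where the numerator would vanish. The range restriction $k\le n-c_q$ rules that case out.
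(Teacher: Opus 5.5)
Your proof is correct and takes essentially the same route as the paper: both rest on the fact that $z^{ml}(d/dz)^{ml}$ multiplies $z^{mj}$ by $(mj)_{ml}$, and both obtain the ratio formula by the same cancellation of falling factorials. The differences are cosmetic. You read off $q(w)=\sum_j (mj)_{ml}\,a_j w^j$ directly from the raw coefficients, whereas the paper passes through the normalized coefficients of $P$ and $Q$ and converts between $\binom{mn}{mk}$ and $\binom{n}{k}$. You also spell out the telescoping and the admissible range of $k$ (using nonnegativity of the roots of $q$), which the paper leaves implicit.
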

Notice in particular from \eqref{eq:coefq.coefp} that $\coef{k}{\qq}=0$ for $n-c_\qq<k\leq n$, which can be directly obtained by noticing that $\qq$ has a root at 0 of multiplicity $c_\qq$. Notice that $c_\qq=\max\{l,c_\pp\}$ where $c_\pp$ is the multiplicity of the root 0 in $\pp$. Since we cannot divide by zero, this is why \eqref{eq:ratio.coefq.coefp} is only valid for $k\leq n-c_\qq$.

\begin{proof}
Since $\PP(z)=\pp(z^m)$ and $\QQ(z)=\qq(z^m)$, their coefficients are related as follows:
$$ (-1)^{mk}\binom{mn}{mk} \coef{mk}{\PP}=(-1)^{k}\binom{n}{k} \coef{k}{\pp}, \quad\text{and} \quad (-1)^{mk}\binom{mn}{mk} \coef{mk}{\QQ}=(-1)^{k}\binom{n}{k} \coef{k}{\qq}.$$

On the other hand, from the relation $\QQ(z)=z^{lm} \PP^{(ml)}(z)$ a direct computation yields
$$\coef{mk}{\QQ}= \coef{mk}{\PP} \falling{mn-mk}{ml} \qquad \text{for } k\leq j.$$

Equation \eqref{eq:coefq.coefp} follows from putting these equations together:
$$
\coef{k}{\qq} = \frac{(-1)^{mk}\binom{mn}{mk}}{(-1)^{k}\binom{n}{k}} \coef{mk}{\QQ}= \frac{(-1)^{mk}\binom{mn}{mk}}{(-1)^{k}\binom{n}{k}}  \coef{mk}{\PP} \falling{mn-mk}{ml} =\coef{k}{\pp} \falling{mn-mk}{ml}.
$$
For the second part, we simply use \eqref{eq:coefq.coefp} to compute
$$\frac{\coef{k}{\qq}}{\coef{k-1}{\qq}} =\frac{\coef{k}{\pp}}{\coef{k-1}{\pp}} \frac{\falling{mn-mk}{ml}}{\falling{mn-mk+m}{ml}}=\frac{\coef{k}{\pp}}{\coef{k-1}{\pp}} \prod_{i=1}^{m} \frac{mn-mk-ml+i} {mn-mk+i},$$
And the result follows from canceling an $m$ in each numerator and denominator.
\end{proof}

Now we use Theorem \ref{ekbounds.thm} to translate this relation between the coefficients of $\pp$ and $\qq$ into a bound for the $k$-th largest root of $\qq$ in terms of the $k$-th largest root of $\pp$. Recall that trivially $\lambda_k(\qq)=0$ for $k=n-l+1,\dots,n$. Thus, we just care to bound the positive roots of $\qq$.

\begin{lemma}
\label{lem:bound.roots.p.q}
For $\pp$ and $\qq$ as defined in Notation \ref{not:setting.derivatives}, the following bounds hold:
$$\frac{1}{k(n-k+1)}\left(\frac{n-k-l}{n-k+1}\right)^{m} \lambda_k(\pp) \leq\lambda_k(\qq) \leq k(n-k+1) \left(\frac{n-k-l+1}{n-k}\right)^{m} \lambda_k(\pp),$$
for $k=1,\dots,n-l$. These bounds hold even if $\lambda_k(p)=0$.
\end{lemma}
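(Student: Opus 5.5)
We will combine Theorem \ref{ekbounds.thm}, applied to both $\pp$ and $\qq$, with the ratio identity \eqref{eq:ratio.coefq.coefp} of Lemma \ref{lem:coefq.coefp}. Write $R_k(\pp)=\coef{k}{\pp}/\coef{k-1}{\pp}$ and similarly $R_k(\qq)$. For $1\le k\le n-c_\qq$, Theorem \ref{ekbounds.thm} gives
\[
\tfrac1k R_k(\qq)\le \lambda_k(\qq)\le (n-k+1)R_k(\qq),
\]
and, rearranged for $\pp$,
\[
\tfrac{1}{n-k+1}\lambda_k(\pp)\le R_k(\pp)\le k\,\lambda_k(\pp).
\]
Since $\qq$ may have roots at $0$, we use Remark \ref{ekbounds.rem.zero} for $\qq$ (and for $\pp$ if $c_\pp>0$). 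Substituting $R_k(\qq)=R_k(\pp)\,\Pi_k$, where $\Pi_k=\prod_{i=1}^m \frac{n-k-l+i/m}{n-k+i/m}$, we obtain
\[
\frac{1}{k(n-k+1)}\Pi_k\lambda_k(\pp)\le\lambda_k(\qq)\le k(n-k+1)\Pi_k\lambda_k(\pp).
\]

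It remains to bound $\Pi_k$ by elementary monotonicity. Each factor $\frac{x-l}{x}$ with $x=n-k+i/m$ is increasing in $x$, and $x\in(n-k,\,n-k+1]$. Hence each factor lies between $\frac{n-k-l}{n-k}\ge\frac{n-k-l}{n-k+1}$ and $\frac{n-k+1-l}{n-k+1}\le\frac{n-k-l+1}{n-k}$. Raising to the $m$-th power gives exactly the stated constants. The slightly lossy forms are harmless; they are presumably chosen to be symmetric or convenient later.

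The main obstacle is the range of $k$ and the zero roots. The ratio formula is only valid for $k\le n-c_\qq$, with $c_\qq=\max\{l,c_\pp\}$, whereas the lemma claims all $k\le n-l$.

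\begin{itemize}
\item If $c_\pp\le l$, the range is exactly $1\le k\le n-l$. The bounds for $\pp$ are still valid for $k\le n-l\le n-c_\pp$, by Remark \ref{ekbounds.rem.zero}.
\item If $c_\pp>l$ and $n-c_\pp<k\le n-l$, then $\lambda_k(\pp)=0$. By \eqref{eq:coefq.coefp}, $\qq$ has $\coef{j}{\qq}=0$ for $j>n-c_\pp$, so $\qq$ has at least $c_\pp$ zero roots and $\lambda_k(\qq)=0$. Both sides of the claimed inequality vanish, and the lower-bound factor $(n-k-l)/(n-k+1)$ is nonnegative since $k\le n-l$.
\end{itemize}

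This also covers the final sentence of the lemma, that the bounds hold even when $\lambda_k(\pp)=0$. The edge case $k=n-l$ needs $n-k=l>0$ in the denominators, which holds since $l\ge1$; if $l=0$ the statement is trivial because $\qq=\pp$.
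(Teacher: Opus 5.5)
Your proposal is correct and follows essentially the paper's route: the upper (resp.\ lower) bound of Theorem~\ref{ekbounds.thm} for $\qq$, the ratio identity \eqref{eq:ratio.coefq.coefp}, the opposite bound of Theorem~\ref{ekbounds.thm} for $\pp$, and a termwise bound on the product, with the zero-root indices handled separately. Your case split via $c_\qq=\max\{l,c_\pp\}$ spells out the paper's one-line claim that $\lambda_k(\pp)=0$ if and only if $\lambda_k(\qq)=0$ for $k\le n-l$.
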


\begin{proof}
We first assume that $\lambda_k(p)$ and $\lambda_k(q)$ are nonzero, in which case, $e_k(p)$, $e_{k-1}(p)$, $e_k(q)$, and $e_{k-1}(q)$ are all nonzero. Then, using the upper bound in Theorem \ref{ekbounds.thm}, first with $\qq$ and then with $\pp$, and the relation between their coefficients \eqref{eq:ratio.coefq.coefp}, we obtain
\begin{align*}
\lambda_k(\qq) & \leq (n-k+1)\frac{\coef{k}{\qq}}{\coef{k-1}{\qq}} \\
& =(n-k+1)\frac{ \coef{k}{\pp} }{ \coef{k-1}{\pp} }  \prod_{i=1}^{m} \frac{ n-k-l+\frac{i}{m} }{ n-k+\frac{i}{m} } \\
&\leq (n-k+1)k\lambda_k(\pp) \left(\frac {n-k-l+1}{n-k}\right)^{m}.
\end{align*}
The lower bound follows similarly from using the lower bound in Theorem \ref{ekbounds.thm}.

To finish the proof, we check that for $k=1,\ldots,n-l$, we have that $\lambda_k(p)=0$ if and only if $\lambda_k(q)=0$, so that the estimate still holds if one (and hence both) of $\lambda_k(p)=0$ and $\lambda_k(q)=0$ is zero. 
\end{proof}

\subsection{Root asymptotics as $n\to\infty$}
\label{ssec:root.asymptotics}

In this section we establish Theorem \ref{intro.thm} in the case of pure differentiation (the $a=0$, $b=1$ case). We begin by setting some notation. 

If $s>0$ and $p$ is a polynomial with non-negative roots $\lambda_1,\dots,\lambda_n$, we denote by $p^{\langle s \rangle}$ the polynomial with roots $\lambda_1^s,\dots,\lambda_n^s$.

If $p$ is a polynomial of degree $n\ge 1$, we let $\meas{p}$ denote the empirical root measure of $p$, that is, the measure putting mass $1/n$ at each root of $p$.  

For a measure $\mu$, we denote its quantile function by $\quant_\mu$.

We remind the reader of the notation introduced in Notation \ref{not:setting.derivatives}. Note that the roots of the polynomial $P$ are the complex $m$-th roots of the roots of $p$, that is, the numbers of the form 
\[
\lambda_k(p)^{1/m}e^{2\pi i j/m},\quad j=1,\ldots,m.
\]
Thus, the empirical root measure of $Q$ has the form of a product measure in polar coordinates:
\[
\meas{Q}=\meas{q^{\langle 1/m \rangle}}\otimes\gamma_m,
\]
where $\gamma_m$ is the measure on the unit circle putting mass $1/m$ at each $m$-th root of unity. Thus, the limiting root measure $\sigma$ of $Q$ will have the form of a product measure $\nu\otimes\Gamma$, where $\nu$ is the limit of $\meas{q^{\langle 1/m \rangle}}$ (assuming the limit exists) and $\Gamma$ is the uniform probability measure on the unit circle. The radial quantile function of $\sigma$ is then the real-variables quantile function of $\nu$.

These elementary observations yield that we only need to focus on the convergence of the real-variables quantile functions. Such convergence will follow from the previous root estimates. 

We are only missing a small technical input that is an elementary result.

\begin{lemma}
\label{lem:growth}
Consider positive integers $k_n\leq n$ and $m_n$ such that when $n\to\infty$ we have $k_n/n\to c\in (0,1]$ and $m_n/\log(n)\to \infty$. (That is to say, $k_n$ grows linearly with $n$ and $m_n$ grows faster than $\log(n)$.) Then 
\[
\lim_{n\to\infty}k_n^{1/m_n}= 1.
\]
\end{lemma}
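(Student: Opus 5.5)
The plan is to pass to logarithms, which reduces the claim to showing that $\log(k_n)/m_n\to 0$. Since $k_n$ is a positive integer, we have $k_n\ge 1$, and therefore $k_n^{1/m_n}\ge 1$ for every $n$. This gives the lower bound for free, and it remains only to show $\limsup_n k_n^{1/m_n}\le 1$.

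For the upper bound we use the trivial estimate $k_n\le n$. It gives
\[
0\le \frac{\log k_n}{m_n}\le \frac{\log n}{m_n}.
\]
By hypothesis, $m_n/\log(n)\to\infty$, so the right-hand side tends to $0$. Hence $\log(k_n)/m_n\to 0$, and by continuity of the exponential, $k_n^{1/m_n}=\exp(\log(k_n)/m_n)\to 1$.

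The argument does not actually need the assumption $k_n/n\to c\in(0,1]$, since $1\le k_n\le n$ already suffices. That hypothesis is presumably included because, in the later application, $k_n$ will be an index such as $k$ or $n-k+1$ with $k\approx\alpha n$. There one will apply the lemma to factors like $k(n-k+1)$, which are bounded by $n^2$; the same argument with $\log(n^2)/m_n\to0$ handles these. The factors $\left(\frac{n-k-l}{n-k+1}\right)$ appear in Lemma \ref{lem:bound.roots.p.q} raised to the power $m$ and then to $1/m$, so they need no such treatment.

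I expect no real obstacle here. The only point to watch is to make sure $m_n\ge1$, so that the exponent $1/m_n$ is defined, and that $k_n\ge1$, so that the logarithm is nonnegative. Both hold because $k_n$ and $m_n$ are positive integers.
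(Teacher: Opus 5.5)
Your proof is correct and is essentially the paper's argument: both take logarithms and reduce the claim to $\log(n)/m_n\to 0$. The only difference is that you squeeze with $1\le k_n\le n$, whereas the paper writes $\log k_n\approx \log c+\log n$; as you note, your version shows the hypothesis $k_n/n\to c\in(0,1]$ is not needed.
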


\begin{proof}
Taking logarithms, we have
\[\log (k_n^{1/m_n})= \frac{\log( k_n)}{m_n} \approx \frac{\log(c)}{m_n}+\frac{\log( n)}{m_n} \to 0,
\]
as claimed.
\end{proof}

\begin{theorem}
\label{thm:main}
With the notation above, assume that the asymptotic root distribution $\ppp^{\langle 1/m \rangle}$ is $\nu_0$. Then the asymptotic root distribution of $\qqq^{\langle 1/m \rangle}$ is $\nu_t$ where

\begin{equation}\label{qnut}
\quant_{\nu_t}(\alpha)= \quant_{\nu_0}(\alpha)\left(1-\frac{t}{\alpha}\right), \qquad \text{for } \alpha\in(t,1],
\end{equation}
and $\quant_{\nu_t}(\alpha)=0$ for $\alpha\in[0,t)$.
\end{theorem}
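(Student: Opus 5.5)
The plan is to work at the level of individual roots, using Lemma \ref{lem:bound.roots.p.q} with $l=\lfloor nt\rfloor$ and $m=m_n$. We take $m$-th roots of the two-sided bound and show that the polynomial prefactors disappear while the power factor converges to $1-t/\alpha$. Write the roots of $p^{\langle 1/m\rangle}$ and $q^{\langle 1/m\rangle}$ in increasing order. The $j$-th smallest root is the $k$-th largest with $k=n-j+1$. If $j\approx \alpha n$, then $n-k\approx \alpha n$ and $k\approx(1-\alpha)n$. Taking $1/m$-th powers in Lemma \ref{lem:bound.roots.p.q} gives
\[
\Big(\tfrac{1}{k(n-k+1)}\Big)^{1/m}\frac{n-k-l}{n-k+1}\,\lambda_k(p)^{1/m}\le \lambda_k(q)^{1/m}\le \big(k(n-k+1)\big)^{1/m}\frac{n-k-l+1}{n-k}\,\lambda_k(p)^{1/m}.
\]

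For the prefactors we use $k(n-k+1)\le n^2$. Since $m_n/\log n\to\infty$, we get $(n^2)^{1/m_n}\to1$; this is just Lemma \ref{lem:growth} applied to $n$, squared. This bound is uniform in $k$, so we do not need $k$ to be linear in $n$. For the ratio factors, fix $\alpha\in(t,1)$ and take $j=j_n$ with $j_n/n\to\alpha$. Then $n-k=j-1$, and both $\frac{n-k-l}{n-k+1}$ and $\frac{n-k-l+1}{n-k}$ tend to $\frac{\alpha-t}{\alpha}=1-t/\alpha$. Hence
\[
\lambda_k(q)^{1/m}=\lambda_k(p)^{1/m}\,(1-t/\alpha)\,(1+o(1)).
\]
This is legitimate because $k\le n-l$ exactly when $j>l$, which holds for large $n$ when $\alpha>t$. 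For $j\le l$ the root of $q$ is $0$, because $q$ has $l$ roots at the origin; this gives $\quant_{\nu_t}=0$ on $[0,t)$.

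To pass from roots to measures we use the quantile function of the empirical measure: $\quant_{\meas{p^{\langle 1/m\rangle}}}(\alpha)$ equals the $\lceil \alpha n\rceil$-th smallest root. Weak convergence of probability measures on $[0,\infty)$ is equivalent to convergence of quantile functions at every continuity point of the limiting quantile. Assume $\alpha\in(t,1)$ is a continuity point of $\quant_{\nu_0}$; all but countably many $\alpha$ qualify. Then the $\lceil\alpha n\rceil$-th root of $p^{\langle1/m\rangle}$ tends to $\quant_{\nu_0}(\alpha)$. The displayed estimate then gives convergence of the corresponding root of $q^{\langle1/m\rangle}$ to $\quant_{\nu_0}(\alpha)(1-t/\alpha)$. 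The right-hand side of \eqref{qnut} is left-continuous and nondecreasing on $(t,1]$, since it is a product of a nonnegative nondecreasing function and a positive increasing one. It is also $0$ on $[0,t)$, so it is the quantile of some probability measure $\nu_t$. Convergence at all but countably many $\alpha$ then yields $\meas{q^{\langle1/m\rangle}}\to\nu_t$ weakly. Uniform boundedness of the roots, coming from the compact support of $\nu_0$ and the bounds above, rules out escape of mass.

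The main obstacle is this last step, the quantile-to-weak-convergence translation. One must make sure that convergence along the index $\lceil\alpha n\rceil$ is uniform enough, which is why the prefactor bound $(n^2)^{1/m}$ was chosen uniformly in $k$. One must also handle the discontinuity points of $\quant_{\nu_0}$ and the boundary value $\alpha=t$. I would invoke the standard fact that pointwise convergence of quantile functions on a dense set of continuity points characterizes weak convergence. The factor $1-t/\alpha$ is continuous, so it introduces no new discontinuities.
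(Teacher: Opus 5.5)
Your proposal is correct and is essentially the paper's proof: take $1/m$-th powers in Lemma~\ref{lem:bound.roots.p.q}, use $m\gg\log n$ to kill the polynomial prefactors, and squeeze at continuity points of $\quant_{\nu_0}$ via $\quant_{\meas{r}}(\alpha)=\lambda_{n+1-\lceil\alpha n\rceil}(r)$, with the $l$ zero roots of $q$ handling $\alpha<t$ (your uniform bound $k(n-k+1)\le n^2$ is a harmless variant of applying Lemma~\ref{lem:growth} with $k$ linear in $n$). You should drop the appeal to uniform boundedness of the roots, which does not follow from weak convergence to a compactly supported limit and is not needed, since convergence of quantile functions at the continuity points of a finite-valued limit quantile already gives weak convergence.
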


\begin{proof}
We will use the following basic result: For measures $\nu_n$ on $\mathbb R$, weak convergence of $\nu_n$ to $\nu$ is equivalent to pointwise convergence of $\quant_{\nu_n}(\alpha)$ to $\quant_\nu(\alpha)$ at each point $\alpha$ where $\quant_\nu$ is continuous. 

Now, it is easy to see that, for any polynomial $r$ of degree $n$, and any $\alpha\in (0,1)$ we have 
\begin{equation}\label{quant_r}
\quant_{\meas{r}}(\alpha)= \lambda_{n+1-\lceil \alpha n\rceil } (r),
\end{equation}
where $\meas{r}$ is the empirical root measure of $r$. We therefore fix $\alpha$ and define, for every $n$,
\begin{equation}\label{kdef}
k:=n+1-\lceil \alpha n\rceil.
\end{equation}
We then separate the argument into two cases.

In the first case, when $\alpha\in[0,t)$, we have $k>n-l$ and we know that $\lambda_k(\qqq^{\langle 1/m \rangle})=0$ because $\qqq$ has a root at 0 of multiplicity at least $l$. Thus we trivially have that
\[ \lim_{n\to\infty} \quant_{\meas{\qqq^{\langle 1/m \rangle}}}(\alpha)=0= \quant_{\nu_t}(\alpha) \qquad \text{ for }  \alpha\in[0,t).
\]

In the second case, when $\alpha\in(t,1)$, we have that $1\leq k\leq n-l$.  By taking power $1/m$ in the inequalities from Lemma \ref{lem:bound.roots.p.q} we obtain
\begin{equation}\label{pqest}
\tfrac{1}{k^{1/m}(n-k+1)^{1/m}}\tfrac{n-k-l}{n-k+1} \lambda_k(\ppp^{\langle 1/m \rangle}) \leq\lambda_k(\qqq^{\langle 1/m \rangle}) \leq k^{1/m}(n-k+1)^{1/m} \tfrac{n-k-l+1}{n-k}\lambda_k(\ppp^{\langle 1/m \rangle}).
\end{equation}
We now note from \eqref{quant_r} and \eqref{kdef} that 
\begin{equation}\label{lambdakp}
\lambda_k(\ppp^{\langle 1/m \rangle})=\quant_{\meas{\ppp^{\langle 1/m \rangle}}}(\alpha),
\end{equation}
where the right-hand side of \eqref{lambdakp} converges to $\quant_{\nu_0}(\alpha)$ at each point of continuity of $\quant_{\nu_0}$. 

We now want to let $n\to\infty$ in \eqref{pqest}, with $l$ and $m$ chosen (as functions of $n$) so that $l/n\to t$ and $m/\log(n)\to\infty$. Then we get that $k/n\to 1-\alpha$. Using Lemma \ref{lem:growth} and the squeeze theorem, we obtain
\[
\lim_{n\to\infty} \quant_{\meas{\qqq^{\langle 1/m \rangle}}}(\alpha)=  \quant_{\nu_0}(\alpha)\left(1-\frac{t}{\alpha}\right),\qquad \text{ for }  \alpha\in(t,1),
\]
provided that $\quant_{\nu_0}$ is continuous at $\alpha$. 
Therefore, the quantile function of $\meas{\qqq^{\langle 1/m \rangle}}$ converges pointwise to the quantile function in \eqref{qnut} at each point where that function is continuous. 
\end{proof}

\subsection{The case when $m$ is fixed.}
\label{ssec:m.fixed}

Using a different approach, we can also study the case where $m$ is fixed and $n$ tends to infinity. In this case, the limit can be identified as the free multiplicative convolution of the original measure times an $m$-fold multiplicative convolution of a Bernoulli distribution with itself.

Recall from  \cite{voiculescu1987multiplication,BerVoi1993} that \textit{free multiplicative convolution} $\mu\boxtimes \nu$ corresponds to the product of freely independent non-commutative random variables distributed as $\mu$ and $\nu$. We will define the convolution using Voiculescu's $S$-transform \cite{voiculescu1987multiplication}. Given a probability measure $\mu\neq \delta_0$ supported in $[0,\infty)$, it is known that the \textit{moment generating function} of $\mu$,
\[
\Psi_\mu (z):= \int_0^\infty \frac{tz}{1-tz}\mu(dt), \quad z\in \mathbb{C}\setminus[0,\infty),
\]
is invertible in a neighborhood of $(-1 + \mu(\{0\}),0)$. Then, the \textit{$S$-transform} of $\mu$ is defined as
$$S_\mu(z):= \frac{1+z}{z} \Psi_\mu^{-1}(z), \qquad z\in (-1 + \mu(\{0\}),0).$$
It is known that $S_\mu$ is a positive, continuous and decreasing function. Moreover, $S_\mu$ uniquely determines $\mu$.  For measures $\mu$ and $\nu$ supported in $[0,\infty)$, we can define $\mu\boxtimes \nu$ as the unique measure satisfying 
\[
S_{\mu\boxtimes \nu}= S_\mu S_\nu,
\] 
in the open neighborhood $(-1+\max\{\mu(\{0\}),\nu(\{0\})\},0)$.

For $t\in(0,1)$, let us denote by $\mathbf{B}$ the Bernoulli distribution with an atom of mass $t$ at 0 and the remaining mass at 1:
\[
\mathbf{B}_t=t\delta_0+(1-t)\delta_1.
\]
One can check that its $S$-transform is given by
$$S_{\mathbf{B}_t}(z)=\frac{z+1}{z+1-t}, \qquad z\in (-1 + t,0).$$



We continue to use Notation \ref{not:setting.derivatives}, but now $m$ will remain fixed as $n$ and $l$ tend to infinity. 
\begin{theorem}
\label{thm:m.fixed}
Assume that when $n\to\infty$, the asymptotic root distribution of $\ppp$ is $\gamma_{0}$. Then for each fixed $m$, if we let $n$ and $l$ tend to infinity with $l/n\to t$, the asymptotic root distribution  $\gamma_{m,t}$ of $\qqq$ is given by the formula
$$\gamma_{m,t}=\gamma_{0} \boxtimes (\mathbf{B}_t)^{\boxtimes m}.$$
\end{theorem}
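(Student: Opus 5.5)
The plan is to use finite free probability. By Lemma \ref{lem:coefq.coefp}, $\coef{k}{\qq}=\falling{mn-mk}{ml}\,\coef{k}{\pp}$. I want to write the multiplier $\falling{mn-mk}{ml}$ as a product of $m$ factors, each of which is the coefficient sequence of a fixed polynomial of degree $n$. Then $\qq$ becomes an iterated finite free multiplicative convolution, and I can invoke the known convergence of $\boxtimes_n$ to $\boxtimes$ (Arizmendi--Perales, Marcus).

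First I factor the multiplier. Writing $(mn-mk)_{ml}=\prod_{j=0}^{ml-1}(mn-mk-j)$ and grouping the indices $j$ by their residue modulo $m$ gives
\[
\falling{mn-mk}{ml}=\prod_{i=0}^{m-1}\ \prod_{s=0}^{l-1}\bigl(m(n-k-s)-i\bigr)=m^{ml}\prod_{i=0}^{m-1}\prod_{s=0}^{l-1}\Bigl(n-k-s-\tfrac{i}{m}\Bigr).
\]
Set $c_i(k):=\prod_{s=0}^{l-1}(n-k-s-i/m)\big/\prod_{s=0}^{l-1}(n-s-i/m)$, so that $c_i(0)=1$. Since $\coef{0}{\qq}$ is only a normalization, I then have $\coef{k}{\qq}\propto \coef{k}{\pp}\prod_{i}c_i(k)$.

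Next I recall that for degree-$n$ polynomials the finite free multiplicative convolution is characterized by $\coef{k}{p\boxtimes_n r}=\coef{k}{p}\coef{k}{r}$. So I need, for each $i$, a real-rooted polynomial $r_i$ of degree $n$ with nonnegative roots and $\coef{k}{r_i}=c_i(k)$. For $i=0$, $c_0(k)=\falling{n-k}{l}/\falling{n}{l}$. This is exactly $\coef{k}$ of $x^l(x-1)^{n-l}$, since $\binom{n-l}{k}/\binom{n}{k}=\falling{n-k}{l}/\falling{n}{l}$. Hence $r_0$ has empirical measure $\frac ln\delta_0+\frac{n-l}{n}\delta_1\to\mathbf B_t$. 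For $i\ge1$ the factor $c_i(k)$ is a ratio of Gamma functions, $\Gamma(n-k+1-i/m)/\Gamma(n-k-l+1-i/m)$ up to normalization. The cleanest route is to observe that $x\mapsto x^{l}\,$-type operators of the form $x^{1-\theta}\partial_x^{\,}x^{\theta}$ act diagonally on coefficients. Concretely, the operator $\bigl(z\,\tfrac{d}{dz}-(n-\tfrac im)\bigr)$-type products applied to $(x-1)^n$ produce such a polynomial, or equivalently the polynomial is a Jacobi-type polynomial with interlacing, hence real-rooted. This realizes $r_i$ with nonnegative roots, again by the argument of Lemma \ref{lem:GNV.differentiation}.

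I expect this real-rootedness, and more importantly the identification of the limiting root law of $r_i$, to be the main obstacle. The alternative I would try first is to avoid computing the limits of the $r_i$ directly. Using the finite $S$-transform of \cite{arizmendi2026s}, defined via ratios $\coef{k}{\cdot}/\coef{k-1}{\cdot}$ at $k\approx(1-\alpha) n$ (compare the ratio in \eqref{eq:ratio.coefq.coefp}), I get
\[
\frac{c_i(k)}{c_i(k-1)}=\frac{n-k-l+1-i/m}{n-k+1-i/m}\longrightarrow \frac{\alpha-t}{\alpha},
\]
uniformly in $i$, when $k/n\to1-\alpha$ and $l/n\to t$. So each factor has the same limiting finite $S$-transform as $\mathbf B_t$, after converting $\alpha$ to the $S$-transform variable $z=-\alpha$, which gives $S_{\mathbf B_t}(z)=\frac{z+1}{z+1-t}$ up to the orientation convention. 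Then the limiting $S$-transform of $\qq$ is $S_{\gamma_0}\,S_{\mathbf B_t}^m$.

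The final step is the convergence theorem: finite $S$-transforms converge to Voiculescu's $S$-transform exactly when the empirical root measures converge weakly (the main result of \cite{arizmendi2026s}). Combined with the multiplicativity of $S$, this gives $\gamma_{m,t}=\gamma_0\boxtimes\mathbf B_t^{\boxtimes m}$. Tightness and compact support are inherited from Lemma \ref{lem:bound.roots.p.q} with fixed $m$, which bounds $\lambda_1(\qq)$ by a constant times $n^{2}\lambda_1(\pp)$. If that polynomial factor is too crude, I would instead use directly that differentiation does not increase the maximal root modulus.
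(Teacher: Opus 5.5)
Your finite $S$-transform route is the paper's proof. You invert the ratio identity \eqref{eq:ratio.coefq.coefp} and apply \cite[Theorem 1.1]{arizmendi2026s} to $p$; each factor $\frac{n-k+1-i/m}{n-k-l+1-i/m}$ then tends to $\frac{1+z}{1+z-t}=S_{\mathbf{B}_t}(z)$ when $k/n\to -z$. Note that in your notation this means $1+z=\alpha$, not $z=-\alpha$. You then apply the same equivalence in reverse to $q$, which is legitimate because $q$ has nonnegative roots by the corollary following Lemma \ref{lem:GNV.differentiation}. That equivalence also makes a separate tightness argument unnecessary, which is fortunate because the bound from Lemma \ref{lem:bound.roots.p.q} grows with $n$.

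Drop the $r_i$ scaffolding, because the claimed realization of $r_i$ with nonnegative roots is false for $i\ge 1$. The $s=l-1$ factor in $c_i(n-l+1)$ equals $-i/m$, so $c_i(n-l+1)<0$, which is impossible for a polynomial with nonnegative roots; for example, $n=l=1$ gives the root $-\frac{i/m}{1-i/m}$. The $S$-transform argument never uses this claim.
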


\begin{proof}
Theorem 1.1 in  \cite{arizmendi2026s} asserts that when $n\to\infty$ and $k/n\to -z\in(0,1)$, we have
$$ \frac{\coef{k-1}{\ppp}}{\coef{k}{\ppp}} \to S_{\gamma_0}(z).$$
On the other hand, taking inverses in \eqref{eq:ratio.coefq.coefp}, we see that
$$
\frac{\coef{k-1}{\qqq}}{\coef{k}{\qqq}} 
=\frac{\coef{k-1}{\ppp}}{\coef{k}{\ppp}} \prod_{i=1}^{m} \frac{n-k+\frac{i}{m}}{ n-k-l+\frac{i}{m}} \qquad \text{for } 1\leq k\leq n-l.$$
Applying \cite[Theorem 1.1]{arizmendi2026s} again, letting $n,k,l\to\infty$ with $l/n\to t$ and $k/n\to -z$, we conclude that
\begin{align*}
S_{\gamma_{m,t}}(z)&=\lim_{n\to\infty}\frac{\coef{k-1}{\qqq}}{\coef{k}{\qqq}}\\ 
&= S_{\gamma_{0}}(z)\left(\frac{1+z}{1-t+z}\right)^{m}\\
&=S_{\gamma_{0}}(z)(S_{\mathbf{B}_t}(z))^m\\ 
&= S_{\gamma_{0} \boxtimes (\mathbf{B}_t)^{\boxtimes m}}(z),
\end{align*}
for all $z\in(t-1,0)$.
\end{proof}

\begin{remark}
\label{rem:m.symmetric}
Following \cite{arizmendi2018k}, we say that a measure on $\mathbb C$ is $m$-symmetric if it is supported on the rays through the $m$-th roots of unity and is invariant under $m$-fold rotations. If $\mu$ is an $m$-symmetric measure, we denote by $\mu^{\langle m \rangle}$ push-forward of $\mu$ under the map $z\mapsto z^m$, so that $\mu^{\langle m \rangle}$ is supported in $[0,\infty)$.

Denote by $\mu_{m,0}$ and $\mu_{m,t}$, respectively, the asymptotic root distribution of $\PP$ and $\QQ$ when $n\to\infty$ with $l/n\to t$. Then, it is clear that $\mu_{m,0}$ and $\mu_{m,t}$ are $m$-symmetric. Moreover, $(\mu_{m,t})^{\langle m \rangle}=\gamma_{m,t}$ for $t\in[0,1)$. This means that using \cite[Corollary 8.10]{arizmendi2018k}, the conclusion of Theorem \ref{thm:m.fixed} can be restated as follows:
$$(\mu_{m,t})^{\langle m \rangle}=(\mu_{m,0})^{\langle m \rangle} \boxtimes (\mathbf{B}_t)^{\boxtimes m}= (\mu_{m,0} \boxtimes \mathbf{B}_t)^{\langle m \rangle},$$
where the operation $\boxtimes$ in the last expression is the one defined in \cite[Definition 8.8]{arizmendi2018k}. We therefore conclude that
$$\mu_{m,t}= \mu_{m,0} \boxtimes \mathbf{B}_t.$$
In other words, to obtain the asymptotic $m$-symmetric distribution of $\QQ$ we just need to do the free multiplicative convolution (in the sense \cite{arizmendi2018k}) of $\mathbf{B}_t$ times the asymptotic $m$-symmetric distribution $\mu_{m,0}$ of $\PP$.





\end{remark}

\section{Repeated applications of $z^a\left(\frac{d}{dz}\right)^b$}\label{ab.sec}

Fix non-negative integers $a,b$, with $b>0$. We consider the repeated action of the differential operator
$$z^a\left(\frac{d}{dz}\right)^b$$
on polynomials of the form $p(z^m)$, where $p$ has real, non-negative roots. The case $a=0$ and $b=1$ gives repeated differentiation. Repeated application of these operators was studied in \cite{HHJKdiff} for random polynomials and in \cite{jalowy2025zeros} for polynomials with real, positive roots. 

In this section, we prove Theorem \ref{intro.thm} for general values of $a$ and $b$. Remark \ref{CompareDiffAB.rem} shows that the results of Theorem \ref{intro.thm} (for deterministic polynomials) are the same as those of \cite{HHJKdiff} (for random polynonials). Some of our formulas resemble those of Jalowy, Kabluchko, and Marynych  \cite[Section 4]{jalowy2025zeros}, where the authors studied the effect of the repeated action of the same operator on the roots of a polynomial with real, positive roots.




In this section we proceed as in the case with repeated differentiation. We first fix the degree $n$ and study some basic properties of the operation. Then we will let $m,n\to \infty$ tend to infinity and prove our main result. Finally, we will study the limiting behavior when $m$ is fixed and $n\to\infty$.

\subsection{Coefficient estimates for a fixed degree}
\label{ssec:general.fixed.n}
To keep track of amount that the operator $z^a\left(\frac{d}{dz}\right)^b$ changes the degree of a monomial, we define
$$\Delta:=a-b.$$

Throughout this section we fix positive integers $m,n$ and $l\leq n$. We now introduce the analogs of the polynomials introduced in Section \ref{DiffSetup.sec} for repeated differentiation. As in that case, we drop the subindex in the names of the polynomials and consider:
\begin{itemize}
    \item $\pp(z)$ is a polynomial of degree $n$ with all non-negative roots.
    \item $\PP(z):=\pp(z^m)$ a polynomial of degree $mn$.
    \item $\RR(z):=z^{-lm\Delta} \left( z^a\left(\frac{d}{dz}\right)^b\right)^{lm}\PP(z)$ a polynomial of degree $mn$.
    \item $\rr(z)$ is the degree $n$ polynomial such that $\RR(z)=\rr(z^m)$.
\end{itemize}
\begin{lemma}
The polynomial $q$ has real, non-negative roots.
\end{lemma}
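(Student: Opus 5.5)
We prove the lemma by induction on the number of applications of $z^a\left(\frac{d}{dz}\right)^b$. The inductive statement is this: if $F(z)=z^k f(z^m)$, where $f$ has real, non-negative roots, then $z^a F^{(b)}(z)$ is again of the form $z^{k'}\tilde f(z^m)$ with $\tilde f$ having real, non-negative roots. Here $k'\ge 0$ and $\deg \tilde f\in\{\deg f,\deg f-1\}$.

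The core step is a single derivative, handled by Lemma \ref{lem:GNV.differentiation}. The only issue is that the lemma is stated for $p$ with positive roots. Roots of $f$ at $0$ are harmless, because we can absorb them into the power of $z$. Write $f(w)=w^{c}g(w)$ with $g(0)\neq0$. Then $F(z)=z^{k+mc}g(z^m)$, and $g$ has positive roots.

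Applying Lemma \ref{lem:GNV.differentiation} to this gives $F'$ in one of two forms:
\begin{itemize}
\item $F'(z)=mz^{m-1}g'(z^m)$, when the exponent $k+mc$ is $0$;
\item $F'(z)=z^{k+mc-1}\tilde g(z^m)$, otherwise, with $\deg\tilde g=\deg g$ and $\tilde g$ having positive roots.
\end{itemize}
In the first case, $g'$ has real positive roots by Rolle's theorem. In the second case, the positivity of the roots of $\tilde g$ is exactly what the lemma provides.

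When $g$ is constant, $\tilde g$ is constant and the lemma does not literally apply. This case is trivial: $F$ is then a monomial, so $F'$ is a monomial or zero.

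In every case $F'$ again has the form $z^{k''}h(z^m)$, with $h$ having real, non-negative roots. Iterating this $b$ times and then multiplying by $z^a$ preserves the form.

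Next we iterate $lm$ times starting from $P(z)=p(z^m)$ with $k=0$. This shows that $\left(z^a\left(\frac{d}{dz}\right)^b\right)^{lm}P(z)=z^{K}h(z^m)$, where $h$ has real non-negative roots.

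Now we compare with $Q$. By definition $Q(z)=z^{-lm\Delta}z^Kh(z^m)$ is a polynomial of degree $mn$, and it is a polynomial in $z^m$. The reason is that every monomial $z^{mj}$ is sent by $lm$ applications of the operator to a multiple of $z^{mj+lm\Delta}$, so every surviving monomial has exponent congruent to $0$ mod $m$ after dividing by $z^{lm\Delta}$. Consequently $K-lm\Delta$ is a multiple of $m$, say $mc'$ with $c'\ge0$, and $q(w)=w^{c'}h(w)$. This has real non-negative roots, which proves the lemma.

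Two points need care, and the main obstacle is the bookkeeping around them.

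The first is that $K-lm\Delta\ge 0$, i.e. that the power-of-$z$ factor never becomes negative. This follows because $Q$ is a genuine polynomial, by the monomial computation above, once we confirm that $Q$ is not identically zero. If $Q$ vanished, the statement would be vacuous.

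The second is the degenerate case in which some monomials are annihilated. An example is $a<b$ acting on low-degree terms, which is the same phenomenon that produces roots at $0$ for $q$. The factorization argument above handles this automatically, since annihilated terms just lower $\deg h$ while the extra power $w^{c'}$ restores degree $n$.

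The only place where analysis beyond bookkeeping enters is the sign-alternation argument of Lemma \ref{lem:GNV.differentiation}, which we invoke as stated.
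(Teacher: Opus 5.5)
Your argument is correct and is essentially the paper's: iterate Lemma~\ref{lem:GNV.differentiation} to write $\bigl(z^a(d/dz)^b\bigr)^{lm}P(z)=z^Kh(z^m)$ with $h$ having non-negative roots, then check that the leftover exponent $K-lm\Delta$ in $Q$ is a non-negative multiple of $m$. The paper gets this last step in one line from the degree count $K-lm\Delta+m\deg h=mn$ (with $\deg h\le n$) rather than from your exponent-mod-$m$ bookkeeping, and it skips the roots at $0$ and constant inner factors that you handle explicitly. Your side remark $\deg\tilde f\in\{\deg f,\deg f-1\}$ can fail when $b>m$, but you never use it.
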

\begin{proof}
Since the operation $z^a\left(\frac{d}{dz}\right)^b$ only consists of differentiating and then adding roots at 0, repeated application of Lemma \ref{lem:GNV.differentiation} yields that $\RR$ has the form $\RR(z)=z^k\hat{q}(z^m)$ where $\hat{q}$ has all non-negative roots. Furthermore, since $\RR$ has degree $mn$, we see that $k$ must be a multiple of $m$. Then, $\rr$ has all non-negative roots. 
\end{proof}

For any polynomial $r$, we define $c_r$ as
\[
c_r=\text{multiplicity of the root } 0 \text{ in } r.
\]
Then $c_q$ depends on $c_p$, and also on the values $a,b$. But it is not hard to check that for $m\geq \max\{a,b\}$, one has:
$$c_\rr:=
\begin{cases} 
c_p+1 & \text{if } \Delta \geq 0 \\ 
c_p+l\vert\Delta\vert & \text{if } \Delta < 0 \\ 
\end{cases}.
$$
Notice that the condition $m\geq \max\{a,b\}$ is general enough for our purposes.  

In what follows, we will use the ``falling factorial'' notation:
\[
(x)_n:=x(x-1)\cdots(x-n+1).
\]



\begin{lemma}
\label{lem:coefq.coefp.gen}
With the notation introduced above, we have that
\begin{equation}
\label{eq:coefq.coefp.gen}
\coef{k}{\rr}=\coef{k}{\pp}\prod_{j=0}^{ml-1} \falling{mn-mk+j\Delta}{b},\qquad \text{for } 0\leq k\leq n.
\end{equation}
By dividing (non-zero) consecutive coefficients, we then obtain
\begin{equation}
\label{eq:ratio.coefq.coefp.gen}
\frac{\coef{k}{\rr}}{\coef{k-1}{\rr}}=\frac{\coef{k}{\pp}}{\coef{k-1}{\pp}} \prod_{j=0}^{ml-1}  \frac{\falling{mn-mk+j\Delta}{b}}{\falling{mn-mk+m+j\Delta}{b}} \qquad \text{for } 1\leq k\leq n-c_\rr,
\end{equation}
\end{lemma}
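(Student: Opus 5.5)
The proof follows the template of Lemma \ref{lem:coefq.coefp}: we track how the operator acts on monomials and then convert between the coefficients of $\PP,\RR$ and those of $\pp,\rr$.

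The operator $T=z^a(d/dz)^b$ sends a monomial $z^N$ to $\falling{N}{b}z^{N+\Delta}$. Iterating $lm$ times gives
\[
T^{lm} z^N=\prod_{j=0}^{lm-1}\falling{N+j\Delta}{b}\, z^{N+lm\Delta}.
\]
This holds even when some factor vanishes: once the exponent drops below $b$ at some step, the falling factorial is zero and the term is killed, consistently with the product. Multiplying by $z^{-lm\Delta}$ restores the exponent $N$. So $\RR$ is obtained from $\PP$ by multiplying the coefficient of $z^N$ by $\prod_{j=0}^{lm-1}\falling{N+j\Delta}{b}$.

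Since $\PP(z)=\pp(z^m)$, only the exponents $N=m(n-k)$ occur. These correspond to the coefficient $a_{n-k}$ of $\pp$. The same multiplier therefore relates the coefficient of $w^{n-k}$ in $\rr$ to that of $\pp$. The normalizations $(-1)^k\binom{n}{k}^{-1}$ in \eqref{ekdef} are identical for $\pp$ and $\rr$, because both have degree $n$. Hence
\[
\coef{k}{\rr}=\coef{k}{\pp}\prod_{j=0}^{ml-1}\falling{mn-mk+j\Delta}{b},
\]
which is \eqref{eq:coefq.coefp.gen}. This route bypasses the $\binom{mn}{mk}$ conversion used earlier; it simply matches raw coefficients.

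For \eqref{eq:ratio.coefq.coefp.gen}, divide the formula for index $k$ by the one for $k-1$. Replacing $k$ by $k-1$ changes $mn-mk$ into $mn-mk+m$, which gives exactly the stated product of quotients.

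The main obstacle is justifying the division, namely that the denominators $\coef{k-1}{\rr}$ and $\coef{k-1}{\pp}$ and the falling-factorial products at index $k-1$ are nonzero for $1\le k\le n-c_\rr$. For this we use that $\rr$ has non-negative roots (previous lemma) with exactly $c_\rr$ roots at zero. By Vieta, $\coef{j}{\rr}\neq0$ precisely for $j\le n-c_\rr$, so $\coef{k-1}{\rr}\neq0$ and also $\coef{k}{\rr}\ne0$. Then \eqref{eq:coefq.coefp.gen} forces $\coef{k-1}{\pp}\neq0$ and the corresponding product to be nonzero, so the quotient is legitimate.
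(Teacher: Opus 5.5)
Your proof is correct and takes essentially the paper's approach. The paper also reduces everything to the monomial action $z^N\mapsto \falling{N}{b}z^{N+\Delta}$, citing \cite{jalowy2025zeros} for this ``direct computation'', and then transfers the identity from $\PP,\RR$ to $\pp,\rr$. Matching raw coefficients instead of passing through the $\binom{mn}{mk}$ normalization is only a cosmetic simplification, and your check that the denominators are nonzero for $1\le k\le n-c_\rr$ supplies a step the paper leaves implicit.
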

Notice that we require $k\leq n-c_\rr$ so that the coefficient in the denominator, $\coef{k-1}{\qq}$, does not vanish.

\begin{proof}
A direct computation (see for instance \cite[Section 4.1]{jalowy2025zeros}) yields
$$\coef{mk}{\RR}=\coef{mk}{\PP}\prod_{j=0}^{ml-1} \falling{mn-mk+j\Delta}{b}.$$
To compute \eqref{eq:coefq.coefp.gen} we use the relation for the coefficients between $\pp$ and $\PP$ obtained from $\PP(z)=\pp(z^m)$, and the analogue for $\RR(z)=\rr(z^m)$:
$$
\coef{k}{\rr} = \frac{(-1)^{mk}\binom{mn}{mk} \coef{mk}{\RR}}{(-1)^{k}\binom{n}{k}} = \frac{(-1)^{mk}\binom{mn}{mk} \coef{mk}{\PP}}{(-1)^{k}\binom{n}{k}} \prod_{j=0}^{ml-1} \falling{mn-mk+j\Delta}{b}$$
$$= \frac{\binom{mj}{mk} \coef{k}{\pp}}{\binom{mn}{mk}}= \coef{k}{\pp}\prod_{j=0}^{ml-1} \falling{mn-mk+j\Delta}{b},
$$
as claimed.
\end{proof}

\begin{lemma}
\label{lem:bound.roots.p.q.gen}
With the notation introduced above, for $k=1,\dots,n-c_\rr$ it holds that
$$\frac{\lambda_k(\pp)}{k(n-k+1)}\prod_{j=0}^{ml-1}  \frac{\falling{mn-mk+j\Delta}{b}}{\falling{mn-mk+m+j\Delta}{b}}  \leq\lambda_k(\rr) \leq k(n-k+1)\lambda_k(\pp) \prod_{j=0}^{ml-1}  \frac{\falling{mn-mk+j\Delta}{b}}{\falling{mn-mk+m+j\Delta}{b}} .$$
\end{lemma}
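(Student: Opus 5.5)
The plan is to follow the proof of Lemma \ref{lem:bound.roots.p.q} almost verbatim, with \eqref{eq:ratio.coefq.coefp.gen} in place of \eqref{eq:ratio.coefq.coefp}. Write
\[
\Pi_k:=\prod_{j=0}^{ml-1}\frac{\falling{mn-mk+j\Delta}{b}}{\falling{mn-mk+m+j\Delta}{b}}.
\]
Since the statement keeps this product unsimplified, no estimate of $\Pi_k$ itself is needed. The whole content is to sandwich the ratio $\coef{k}{\cdot}/\coef{k-1}{\cdot}$ between root bounds twice: once for $\rr$ and once for $\pp$.

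First consider the generic case, where $\lambda_k(\pp)$ and $\lambda_k(\rr)$ are positive and the coefficients $\coef{k}{\pp}$, $\coef{k-1}{\pp}$, $\coef{k}{\rr}$, $\coef{k-1}{\rr}$ are nonzero. For the upper bound, apply the upper bound of Theorem \ref{ekbounds.thm} to $\rr$. This is legitimate even though $\rr$ may vanish at $0$, by Remark \ref{ekbounds.rem.zero}, since $k\le n-c_\rr$. It gives $\lambda_k(\rr)\le (n-k+1)\,\coef{k}{\rr}/\coef{k-1}{\rr}$. Then substitute \eqref{eq:ratio.coefq.coefp.gen} to get $(n-k+1)\,\Pi_k\,\coef{k}{\pp}/\coef{k-1}{\pp}$. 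Finally, apply the lower bound of Theorem \ref{ekbounds.thm} to $\pp$ in the form $\coef{k}{\pp}/\coef{k-1}{\pp}\le k\lambda_k(\pp)$. The lower bound is symmetric: use the lower bound of the theorem for $\rr$, namely $\lambda_k(\rr)\ge \frac1k\coef{k}{\rr}/\coef{k-1}{\rr}=\frac1k\Pi_k\coef{k}{\pp}/\coef{k-1}{\pp}$, and then the upper bound for $\pp$, namely $\coef{k}{\pp}/\coef{k-1}{\pp}\ge \lambda_k(\pp)/(n-k+1)$.

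Both of these multiplications require $\Pi_k\ge 0$, and I will check this first. For $k\le n-c_\rr$ every falling factorial in the product should be positive. The smallest argument that occurs is $mn-mk+j\Delta$ with $j=ml-1$ when $\Delta<0$. In that case $n-k\ge c_\rr\ge l|\Delta|$, so the argument is at least $m l|\Delta|-(ml-1)|\Delta|=|\Delta|$. We therefore need $(x)_b>0$ for $x\ge|\Delta|=b-a$, which does not follow from this alone when $a>0$. The safer route is to note that $\coef{k}{\rr}$ and $\coef{k}{\pp}$ are both positive for $k\le n-c_\rr$, because the roots are nonnegative and $\rr$ has exactly $n-c_\rr$ positive roots. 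By \eqref{eq:coefq.coefp.gen} the full product is then positive, and so $\Pi_k$, a ratio of two such positive products, is positive as well.

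The remaining obstacle is the degenerate case $\lambda_k(\pp)=0$, where Theorem \ref{ekbounds.thm} cannot be applied to $\pp$ because $\coef{k}{\pp}=0$. I plan to argue as at the end of Lemma \ref{lem:bound.roots.p.q}: for $k\le n-c_\rr$, the quantity $c_\rr\ge c_\pp$ from the displayed formula for $c_\rr$ shows that $k\le n-c_\pp$, so $\lambda_k(\pp)>0$ automatically. Hence the degenerate case does not arise in the stated range, and the inequalities hold.
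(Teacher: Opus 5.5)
Your proposal is correct and follows the paper's proof. For the upper bound on $\lambda_k(\rr)$ you apply the upper bound of Theorem \ref{ekbounds.thm} to $\rr$, substitute \eqref{eq:ratio.coefq.coefp.gen}, and then use the lower bound of the theorem for $\pp$; for the lower bound you swap the roles of the two bounds. Your extra checks fill in details the paper leaves implicit. These are: positivity of the product, which follows most cleanly from \eqref{eq:ratio.coefq.coefp.gen} as a ratio of two positive coefficient ratios; the appeal to Remark \ref{ekbounds.rem.zero} for roots at $0$; and $\lambda_k(\pp)>0$ because $c_\rr\ge c_\pp$, which is better justified directly from \eqref{eq:coefq.coefp.gen} than from the displayed formula for $c_\rr$.
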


\begin{proof}
We proceed analogously to the proof of Lemma \ref{lem:bound.roots.p.q}. Using the upper bound in Theorem \ref{ekbounds.thm}, first with $\rr$ and then with $\pp$, and the relation between their coefficients \eqref{eq:ratio.coefq.coefp.gen}, yields
\begin{align*}
\lambda_k(\rr) & \leq (n-k+1)\frac{\coef{k}{\rr}}{\coef{k-1}{\rr}} \\
& =(n-k+1)\frac{ \coef{k}{\pp} }{ \coef{k-1}{\pp} } \prod_{j=0}^{ml-1} \frac{\falling{mn-mk+j\Delta}{b}}{\falling{mn-mk+m+j\Delta}{b}} \\
&\leq (n-k+1)k\lambda_k(\pp)\prod_{j=0}^{ml-1}  \frac{\falling{mn-mk+j\Delta}{b}}{\falling{mn-mk+m+j\Delta}{b}}.
\end{align*}
The lower bound follows similarly from using the lower bound in Theorem \ref{ekbounds.thm}.
\end{proof}

\subsection{Root asymptotics as $n\to\infty$}





We now prove Theorem \ref{intro.thm} for general values of $a$ and $b$, using the same general strategy as in the case of repeated differentiation (the $a=0$, $b=1$ case), which was treated in Section \ref{repeatedDiff.sec}.

Recall that a key step in the proof of Theorem \ref{thm:main} was understanding the limiting behavior of the quantities appearing in the bounds from Lemma \ref{lem:bound.roots.p.q}. For this general case, we need to understand the limiting behavior of the quantitites in Lemma \ref{lem:bound.roots.p.q.gen}. This is more technical so we record it in a separate statement.

\begin{lemma}\label{abProd.lem}
Consider positive integers $k,l\leq n$  such that when $n\to\infty$ we have $1-k/n\to \alpha$ and $l/n\to t$. In the case $\Delta<0$, we assume
\[
\alpha>\alpha_{\min}=t\vert\Delta\vert. 
\]
Then 
\begin{equation}\label{bigProduct}
\lim_{n\to\infty}\left[\prod_{j=0}^{ml-1}\frac{\falling{mn-mk+j\Delta}{b}} {\falling{mn-mk+m+j\Delta}{b}}\right]^\frac{1}{m}=\begin{cases} e^{-\frac{ta}{\alpha}} & \text{if } \Delta=0, \\
\left(1+\frac{t\Delta}{\alpha}\right)^{-\frac{b}{\Delta}} & \text{if } \Delta\neq 0.
\end{cases}
\end{equation}
Here, it does not matter if $m$ is fixed or if it tends to $\infty$.
\end{lemma}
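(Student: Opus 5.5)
Write $N:=mn-mk$, so $N/(mn)\to\alpha$. The plan is to take logarithms and turn the product into a Riemann sum. Expanding each falling factorial,
\[
\frac{1}{m}\log\prod_{j=0}^{ml-1}\frac{\falling{N+j\Delta}{b}}{\falling{N+m+j\Delta}{b}}
=\frac1m\sum_{j=0}^{ml-1}\sum_{i=0}^{b-1}\Bigl[\log(N+j\Delta-i)-\log(N+m+j\Delta-i)\Bigr].
\]
Each bracket equals $-\log\bigl(1+\tfrac{m}{N+j\Delta-i}\bigr)$. The hypothesis $\alpha>t|\Delta|$ when $\Delta<0$ guarantees that $N+j\Delta-i\ge N-ml|\Delta|-b\ge c\,mn$ for some $c>0$ and all $n$ large. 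Hence every argument is of order $mn$, and the ratio $m/(N+j\Delta-i)=O(1/n)$ is uniformly small, whether $m$ is fixed or tends to infinity.

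Next, use $\log(1+x)=x+O(x^2)$. The quadratic errors contribute at most
\[
\frac1m\cdot ml\cdot b\cdot O\!\left(\frac{m^2}{(mn)^2}\right)=O(l/n^2)\to0 .
\]
Similarly, replacing $N+j\Delta-i$ by $N+j\Delta$ changes each term by a relative factor $1+O(1/(mn))$, which is negligible. We are left with
\[
-\,b\sum_{j=0}^{ml-1}\frac{1}{N+j\Delta}=-\,b\sum_{j=0}^{ml-1}\frac{1}{mn}\,\frac{1}{N/(mn)+\Delta\, j/(mn)} .
\]
This is a Riemann sum with mesh $1/(mn)$ for the integral
\[
-b\int_0^{t}\frac{ds}{\alpha+\Delta s},
\]
since $ml/(mn)=l/n\to t$ and $N/(mn)\to\alpha$. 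The integrand is continuous and bounded on $[0,t]$ because $\alpha+\Delta s\ge\alpha-t|\Delta|>0$. Uniform convergence of the integrand, together with the monotonicity of $1/x$, justifies passing to the limit.

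Finally, evaluate the integral. If $\Delta=0$ it equals $-bt/\alpha=-at/\alpha$, since $a=b$. If $\Delta\ne0$ it equals $-\frac{b}{\Delta}\log\bigl(1+\frac{t\Delta}{\alpha}\bigr)$. Exponentiating gives \eqref{bigProduct}.

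The main obstacle is the uniform control of the error terms independently of whether $m$ is bounded or unbounded. This is exactly what the lower bound $N+j\Delta-i\gtrsim mn$ provides, and in the case $\Delta<0$ it is the only place where the assumption $\alpha>\alpha_{\min}$ enters. I would handle the $\Delta<0$ case first, establishing this bound explicitly, since the cases $\Delta\ge0$ are immediate from $N\ge mn(\alpha-o(1))$.
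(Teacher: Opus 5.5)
Your proposal is correct and follows essentially the same route as the paper. You take logarithms, linearize $\log$ using the uniform lower bound $N+j\Delta-i\gtrsim mn$ (which is exactly where $\alpha>t|\Delta|$ enters), and recognize a Riemann sum for $-b\int_0^t ds/(\alpha+\Delta s)$; the only differences are that you keep explicit $O(l/n^2)$ error bounds where the paper uses informal approximations, and that you absorb the case $\Delta=0$ into the same Riemann sum instead of computing $\bigl(1-\frac{1}{n(1-k/n+1/n)}\bigr)^{bl}$ directly.
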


\begin{proof}
Since $b$ is fixed, notice that for $n$ large enough we have that 
$$\frac{\falling{mn-mk+j\Delta}{b}} {\falling{mn-mk+m+j\Delta}{b}}= \prod_{i=0}^{b-1}\frac{mn-mk+j\Delta-i} {mn-mk+m+j\Delta-i} \approx \left(\frac{mn-mk+j\Delta} {mn-mk+m+j\Delta}\right)^{b}.$$
Thus, the limit on the left-hand side of \ref{bigProduct} is the same as 
\begin{equation}\label{new.lim}
\lim_{n\to\infty}\prod_{j=0}^{ml-1}\left(\frac{mn-mk+j\Delta} {mn-mk+m+j\Delta}\right)^{b/m}.
\end{equation}

When $\Delta=0$, \eqref{new.lim} simplifies and we directly compute
$$\lim_{n\to\infty}\left(\frac{mn-mk} {mn-mk+m}\right)^{\frac{bml}{m}}=\lim_{n\to\infty}\left(1-\frac{1}{n(1-\frac{k}{n}+\frac{1}{n})}\right)^{bl}=e^{-\frac{tb}{\alpha}}=e^{-\frac{ta}{\alpha}},$$
as claimed.

When $\Delta\neq0$, taking logarithm  in \eqref{new.lim} and simplifying yields
\begin{equation}\label{logsum}
\frac{b}{\Delta}\frac{\Delta}{mn}\sum_{j=0}^{ml-1}n\log\left(\frac{1-\frac{k}{n}+\frac{j\Delta}{mn}}{1-\frac{k}{n}+\frac{j\Delta}{mn}+\frac{1}{n}}\right).
\end{equation}
The logarithm in \eqref{logsum} has the form $\log(x/(x+y))$, with 
\[
x=1-\frac{k}{n}+\frac{j\Delta}{mn};\quad y=\frac{1}{n}.
\]
When $n$ is large, $1-k/n$ approaches $\alpha$, while $j\Delta/(mn)$ ranges between 0 and $(ml-1)\Delta/(mn)\approx t\Delta$. Thus, when $n$ is large, $x$ will be in an interval whose ends are approximately $\alpha$ and $\alpha+t\Delta$. In particular, since we assume $\alpha>t\vert\Delta\vert$ in the case $\Delta<0$, the value of $x$ will be bounded away from zero. 

Meanwhile, $y$ is small, of order $1/n$. Thus, when $n$ is large, we can make the following approximation, with estimates that are uniform in $j$,
\[
\log\left( \frac{x}{x+y}\right)\approx-\frac{y}{x}.
\]
Plugging this approximation into the sum in \eqref{logsum}, we get, 
\begin{equation}\label{logsum2}
-\frac{b}{\Delta}\frac{\Delta}{mn}\sum_{j=0}^{ml-1}\frac{1}{\alpha_n+\frac{j\Delta}{mn}}.
\end{equation}
where
\[
\alpha_n=1-\frac{k}{n}\approx\alpha.
\]

Now, recalling that $l\approx nt$, we see that the sum
\begin{equation}\label{Rsum}
\frac{\Delta}{mn}\sum_{j=0}^{ml-1}\frac{1}{\alpha_n+\frac{j\Delta}{mn}}
\end{equation}
is a Riemann sum approximation to the integral
\[
\int_0^{t\Delta}\frac{1}{\alpha+\xi}\,d\xi=\log\left(1+\frac{t\Delta}{\alpha}\right),
\]
and the $\Delta\ne 0$ case of \eqref{bigProduct} follows by exponentiation, after putting back in the factor of $-b/\Delta$ in \eqref{logsum2}.
\end{proof}

We make use of the notation introduced in Section \ref{ssec:general.fixed.n}. As in Section \ref{ssec:root.asymptotics}, it suffices to show that if the root distribution of the polynomial $p^{\langle1/m\rangle}$ converges to a measure $\nu_0$ on $[0,\infty)$, then the root distribution polynomial $q^{\langle1/m\rangle}$ converges to a measure $\nu_t$, where the quantile functions of $\nu_0$ and $\nu_t$ are related as in the statement of Theorem \ref{intro.thm}.

\begin{proof}[Proof of Theorem \ref{intro.thm}]




Recall that $\alpha_{\min}=\max(0,t(b-a))$. The analysis of the case $\alpha<\alpha_{\min}$ is similar to the corresponding case in the proof of Theorem \ref{thm:main} and is omitted.

We therefore fix $\alpha\in(0,1)$, and if $\Delta<0$, we also assume that 
\[
\alpha>\alpha_{\min}=t\vert\Delta\vert\quad(\text{if }\Delta<0.)
\]
As in Section \ref{ssec:general.fixed.n}, we let 
\[
k:=n+1-\lceil\alpha n\rceil,
\]
so that, for any polynomial $r$, we have $\quant_{\meas{r}}(\alpha)=\lambda_k(r)$. We assume that $m$ and $l$ are chosen, as functions of $n$, so that 
\[
\frac{m}{\log(n)}\to\infty;\quad \frac{l}{n}\to t,
\]

Imitating the proof of Theorem \ref{thm:main}, we take the $1/m$-th power of each quantity in Lemma \ref{lem:bound.roots.p.q.gen}. This change will, in particular, change $\lambda_k(p)$ to $\lambda_k(p)^{1/m}=\lambda_k(p^{\langle 1/m\rangle})$ and similarly for $\lambda_k(q)$. Then using Lemmas \ref{lem:growth} and \ref{abProd.lem} we get that both the upper and lower bounds have the same limit as $n\to\infty$: 
$$
\begin{cases} e^{-\frac{ta}{\alpha}} & \text{if } a=b, \\
\left(1+\frac{t(a-b)}{\alpha}\right)^{-\frac{b}{a-b}} & \text{if } a\neq b.
\end{cases}
$$
Then by the squeeze theorem we conclude that the quantile function of $\meas{\rr^{\langle 1/m \rangle}}$ converges pointwise to the quantile function of $\sigma_t$ almost everywhere in $[0,1]$.
\end{proof}

\subsection{The case when $m$ is fixed}

Same as in Section \ref{ssec:m.fixed} we can fix $m$ and let $n\to\infty$ to obtain a generalization of Theorem \ref{thm:m.fixed}. 

Denote by $\nu_{a,b;t}$ the (unique) measure supported in $[0,\infty)$ that has $S$-transform given by
\begin{equation}
\label{eq:Stransform.nu2}
S_{\nu_{a,b;t}}(z)=\begin{cases} e^{\frac{ta}{z+1}} & \text{if } a=b, \\
\left(\frac{z+1+t(a-b)}{z+1}\right)^{\frac{b}{a-b}} & \text{if } a\neq b,
\end{cases}
\qquad \text{ for } z\in (\nu_{a,b;t}(\{0\})-1, 0)
\end{equation}
The measure $\nu_{a,b;t}$ was studied in  \cite[Theorem 4.2]{jalowy2025zeros}; see Remark \ref{rem:measure.JKM} below for more details. Essentially the same measures (to be precise, the convolution square of these measaures) appeared previously in \cite[Theorem 6.3 and Equation (6.8)]{HHJKdiff}.

\begin{theorem}
\label{thm:m.fixed.gen}
Fix $m$ and assume that when $n\to\infty$, the asymptotic root distribution of $\ppp$ is $\gamma_{0}$. Then, when $l/n\to t$, the asymptotic root distribution of $\rrr$, denoted by $\gamma_{m,t}$ is
$$\gamma_{m,t}=\gamma_{0} \boxtimes (\nu_{a,b;t})^{\boxtimes m}.$$
\end{theorem}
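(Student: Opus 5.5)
The plan mirrors the proof of Theorem \ref{thm:m.fixed}, with the Bernoulli factor replaced by the limit of the product in Lemma \ref{lem:coefq.coefp.gen}. Starting from \eqref{eq:ratio.coefq.coefp.gen} and taking reciprocals, for $1\le k\le n-c_\rr$ we have
\[
\frac{\coef{k-1}{\rrr}}{\coef{k}{\rrr}}=\frac{\coef{k-1}{\ppp}}{\coef{k}{\ppp}}\prod_{j=0}^{ml-1}\frac{\falling{mn-mk+m+j\Delta}{b}}{\falling{mn-mk+j\Delta}{b}}.
\]
We will let $n,k,l\to\infty$ with $l/n\to t$ and $k/n\to -z$. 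Then $1-k/n\to\alpha:=1+z$.

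By \cite[Theorem 1.1]{arizmendi2026s}, the first factor on the right converges to $S_{\gamma_0}(z)$. For the product we use Lemma \ref{abProd.lem}, which the paper states explicitly holds for fixed $m$. It gives that the $1/m$-th power of the reciprocal product tends to $e^{ta/\alpha}$ when $a=b$, and to $(1+t\Delta/\alpha)^{b/\Delta}$ when $a\ne b$. Since $m$ is now fixed, raising to the $m$-th power preserves the limit. The product therefore converges to
\[
\left(e^{\frac{ta}{z+1}}\right)^m \quad\text{or}\quad \left(\frac{z+1+t(a-b)}{z+1}\right)^{\frac{bm}{a-b}},
\]
which is exactly $S_{\nu_{a,b;t}}(z)^m$ by \eqref{eq:Stransform.nu2}.

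To conclude, we apply \cite[Theorem 1.1]{arizmendi2026s} in the converse direction to $\rrr$. Granting that the limit root distribution $\gamma_{m,t}$ of $\rrr$ exists, the left-hand ratio converges to $S_{\gamma_{m,t}}(z)$. Hence $S_{\gamma_{m,t}}=S_{\gamma_0}S_{\nu_{a,b;t}}^m=S_{\gamma_0\boxtimes\nu_{a,b;t}^{\boxtimes m}}$ on the appropriate interval. Uniqueness of measures with a given $S$-transform then identifies $\gamma_{m,t}$.

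The main obstacle is the domain bookkeeping.
\begin{itemize}
\item In the case $\Delta<0$, Lemma \ref{abProd.lem} requires $\alpha>t|\Delta|$. This corresponds to $z\in(t|\Delta|-1,0)$, which should match $(\nu_{a,b;t}(\{0\})-1,0)$. We must check that $\nu_{a,b;t}$ has an atom of mass exactly $t|\Delta|$ at $0$, consistent with $c_\rr=c_\pp+l|\Delta|$.
\item The condition $k\le n-c_\rr$ must correspond to this same range of $z$.
\item The existence of the limit root distribution of $\rrr$ has to come from the statement of \cite{arizmendi2026s}. That theorem identifies the limit from convergence of the coefficient ratios, given a uniform bound on the roots. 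We would obtain such a bound from Lemma \ref{lem:bound.roots.p.q.gen} together with compact support of $\gamma_0$. If the cited result only gives the forward direction, we would instead extract convergent subsequences by tightness and identify every subsequential limit through its $S$-transform.
\end{itemize}
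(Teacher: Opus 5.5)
Your proposal is essentially the paper's proof: invert \eqref{eq:ratio.coefq.coefp.gen}, use \eqref{bigProduct} (raised to the fixed power $m$) to identify the limit of the product as $S_{\nu_{a,b;t}}(z)^m$, and conclude with \cite[Theorem 1.1]{arizmendi2026s} exactly as in Theorem \ref{thm:m.fixed}. The paper uses that theorem as an equivalence between weak convergence of root distributions and convergence of the coefficient ratios, so it also gives the existence of $\gamma_{m,t}$ without any uniform root bound; this matters because your suggested fallback via Lemma \ref{lem:bound.roots.p.q.gen} would fail for fixed $m$, where the factor $k(n-k+1)$ is no longer neutralized by taking $1/m$-th powers.
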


\begin{proof}
Taking inverses in \eqref{eq:ratio.coefq.coefp.gen}, for $1\leq k\leq n-c_{\rrr}-1$ we get 
\begin{align*}
\frac{\coef{k-1}{\rrr}}{\coef{k}{\rrr}} 
&=\frac{\coef{k-1}{\ppp}}{\coef{k}{\ppp}} \prod_{j=0}^{ml-1}\frac{\falling{mn-mk+m+j\Delta}{b}}{\falling{mn-mk+j\Delta}{b}} 
\end{align*}

Notice that a direct consequence of \eqref{bigProduct} is that when $n\to\infty$ with $1-k/n\to 1+z\in (\nu_{a,b;t}(\{0\}), 1)$ and $l/n\to t$ we get
\begin{align}
\label{eq:Stransform.nu}
\lim_{n\to\infty}\prod_{j=0}^{ml-1}\frac{\falling{mn-mk+m+j\Delta}{b}}{\falling{mn-mk+j\Delta}{b}} &=\begin{cases} e^{\frac{tam}{1+z}} & \text{if } \Delta=0 \\
\left(1+\frac{t\Delta}{1+z}\right)^{\frac{bm}{\Delta}} & \text{if } \Delta\neq 0
\end{cases} \nonumber\\
&=(S_{\nu_{a,b;t}}(z))^m.
\end{align}
The conclusion follows from \cite[Theorem 1.1]{arizmendi2026s}, as in the proof of Theorem \ref{thm:m.fixed}.
\end{proof}

\begin{remark}[Connection to differential operators acting on polynomials with positive roots]
\label{rem:measure.JKM}
Following the notation of \cite{jalowy2025zeros}, denote by $\tpol(z)$ the polynomial of degree $n$ with coefficients given by
$$ \coef{k}{\tpol}=\prod_{j=0}^{l-1} \falling{n-k+j\Delta}{b}. $$
According to \cite[Equation (18)]{jalowy2025zeros}, we have that 
\[
\left(z^a\left(\frac{d}{dz}\right)^b\right)^lr=T^{(a,b)}_{n,l}\boxtimes_nr
\]
for any polynomial $r$ of degree $n$, where $\boxtimes_n$ is the finite free multiplicative convolution. (Our normalization of the operators and polynomials differs from that of \cite{jalowy2025zeros} by a constant.) The authors of \cite{jalowy2025zeros} then use the polynomials $T^{(a,b)}_{n,l}$ to study the evolution of polynomials with real, positive roots under repeated applications of $z^a(d/dz)^b$.

By taking $m=1$ in \eqref{eq:Stransform.nu2} we get that
$$\lim_{n\to\infty}\frac{\coef{k-1}{\tpol}}{\coef{k}{\tpol}}=\prod_{j=0}^{l-1}\frac{\falling{n-k+1+j\Delta}{b}}{\falling{n-k+j\Delta}{b}}=S_{\nu_{a,b;t}}(z).$$
Thus, \cite[Theorem 1.1]{arizmendi2026s} asserts that $\nu_{a,b;t}$ is the weak limit of $\meas{\tpol}$ when $n\to\infty$ with $l/n\to t$, and this is precisely the measure from \cite[Theorem 4.2]{jalowy2025zeros}. In particular, we have that

\begin{itemize}
    \item If $a\geq b$, then $\nu_{a,b;t}$ is $\boxtimes$-infinite divisible.
    \item If $a=b$, then $\nu_{a,a;t}$ is the $\boxtimes$-free-Poisson distribution.
    \item If $a<b$, then $\nu_{a,b;t}= (\mathbf{B}_{(b-a)t})^{\boxtimes \frac{b}{b-a}}$ is a free multiplicative self-convolution of the Bernoulli measure $\mathbf{B}_{(b-a)t}=(bt-at)\delta_0+(1-bt+at)\delta_1$.
\end{itemize}
We refer the reader to \cite[Section 4.1]{jalowy2025zeros} for more details on the measue $\nu_{a,b;t}$.
\end{remark}

\begin{remark}[Interpretation as free convolution of $m$-symmetric distributions]
Denote by $\mu_{m,0}$ and $\mu_{m,t}$, respectively, the asymptotic root distribution of $\PP$ and $\QQ$ when $n\to\infty$ with $l/n\to t$. Then, as in Remark \ref{rem:m.symmetric}, the measures $\mu_{m,0}$ and $\mu_{m,t}$ are $m$-symmetric. Moreover, by \cite[Corollary 8.10]{arizmendi2018k}, the conclusion of Theorem \ref{thm:m.fixed.gen} can be restated as
$$\mu_{m,t}= \mu_{m,0} \boxtimes \nu_{a,b;t},$$
where the convolution $\boxtimes$ is the one in \cite[Definition 8.8]{arizmendi2018k}. In other words, to obtain the asymptotic $m$-symmetric distribution of $\QQ$ we just need to multiply by $\nu_{a,b;t}$ the asymptotic $m$-symmetric distribution of $\PP$. 
\end{remark}





\bibliographystyle{alpha}
\bibliography{References}

\end{document}